\newcommand{\Ra}{\mathcal{R}}
\newcommand{\Nu}{\mathcal{N}}
\newcommand{\R}{\mathbb{R}}
\def\vec#1{\mbox{\boldmath $#1$}}
\newtheorem{theorem}{Theorem}
\newtheorem{lemma}{Lemma}
\newtheorem{remark}{Remark}
\newtheorem{corollary}{Corollary}
\newtheorem{definition}{Definition}
\newtheorem{assumption}{Assumption}
\begin{document}

\title{Strong consistency of an estimator by the truncated singular value decomposition
for an errors-in-variables regression model with collinearity}

\author{Kensuke Aishima
\thanks{Faculty of Computer and Information Sciences,  
  Hosei University, Tokyo 184-8585, Japan
  (\texttt{aishima@hosei.ac.jp}).}
}

\maketitle

\begin{abstract}
In this paper, we prove strong consistency of an estimator
by the truncated singular value decomposition
for a multivariate errors-in-variables linear regression model with collinearity.
This result is an extension of Gleser's proof of the strong consistency of total least squares solutions to the case with modern rank constraints. While the usual discussion of consistency in the absence of solution uniqueness deals with the minimal norm solution, the contribution of this study is to develop a theory that shows the strong consistency of a set of solutions. The proof is based on properties of orthogonal projections, specifically properties of the Rayleigh-Ritz procedure for computing eigenvalues. This makes it suitable for targeting problems where some row vectors of the matrices do not contain noise. Therefore, this paper gives a proof for the regression model with the above condition on the row vectors, resulting in a natural generalization of the strong consistency for the standard TLS estimator.

\end{abstract}

\noindent
{\em Keywords}: Singular value decomposition, eigenvalue problems, total least squares, low rank approximation, consistent estimation

\noindent
{\em PACS}: 65F20, 65F15, 15A09, 15A18, 62H12, 60B12 

\section{Introduction}
This study deals with asymptotic theory for multivariate errors-in-variables linear regression models. The models are associated to an overdetermined system $AX \approx B$ for given $A\in \R^{m\times n}$ and $B\in \R^{m \times \ell}$, contaminated with random noise, where $X\in \R^{n \times \ell}$ is to be estimated. This mathematical formulation has been successfully applied to many scientific and engineering research areas, in particular to a data fitting problem. In such a problem setting, with more information available in recent years, rank constraints are often introduced to overcome over-fitting that causes a loss of accuracy in parameter estimation.
We aim to make progress in the asymptotic theory of the effects of the rank constraints. More specifically, we derive an important relationship between a multivariate errors-in-variables linear regression model associated with overdetermined system $AX \approx B$ and the solution of the total least squares (TLS) problem with rank constraints.
With rank constraints, the solutions are usually not unique and form a set. Therefore, the minimum norm solution is often discussed.
In contrast, it is worth noting that this paper proves strong consistency in the statistical sense 
for a set of solutions to the TLS problem under reasonable assumptions.

From the point of view of numerical methods, 
this paper focuses on the total least squares (TLS) method~\cite{GV1980,GV2013}, which is described as follows.
It is a powerful technique to find $X$ for the overdetermined system $AX \approx B$.
The solution $X$ is computed by orthogonal projections and the singular value decomposition (SVD)
due to the Eckart-Young-Mirsky theorem.
Among many studies, \cite{Hnetynkova2011} gives the first full solvability analysis of multiple observation TLS covering all cases.
The TLS has many variants
adopted to the problem formulations in many application areas.
A typical example is a rank-constrained TLS problem where the coefficient matrix $A$ has collinearity.
This problem can be solved by
the so-called truncated SVD~\cite{FB1994,Huffel1987,MDB2021}, and statistical consistency is presented in~\cite{Park2011}.
A similar constraint, the reduced rank estimation, which assumes low rank of $X$, is also important and often used~\cite{MCWZ2015}.
In addition to the rank constraints, we are concerned with the following
constrained TLS problems. Suppose that,
for the matrix $A$, the first $j$ columns are exactly known
without noise. In addition, the first $k$ rows of $A$ and $B$
are exactly known without noise.
Such a constrained TLS problem can be solved
by the SVD and the QR decomposition~\cite{Demmel1987,Huffel1991,wei1998}.
The discussion of the column constraints
is detailed in \cite{GHS1987,Huffel1989,PW1993}.
Recently, the problem with the row constraints 
has attracted much attention in application areas,
and thus numerical method is revived with
mathematical analysis for the solvability conditions~\cite{LJ2022,LJY2022}.
The importance of condition numbers has led to vigorous research in recent years~\cite{MZW2020,MDB2021,ZMW2017}.

In this paper, we perform statistical consistency analysis for large sample size.
More specifically, we generalize the strong consistency of the TLS estimator,
proved by Gleser~\cite{Gleser1981}.
In the previous study,
the strong consistency is extended to the minimal norm solution to the rank deficient case~\cite{Park2011}.
Similarly, Gleser's result is easily extended to
the column constraints TLS problem as in~\cite[\S 3]{Huffel1991}.
Recently, for the row constraints TLS problem,
the strong consistency has been proved
under reasonable assumptions~\cite{Aishima2022}.
This consistency proof is generalized to
the case of both column and row constraints in \cite{Aishima2023}.
There exists other consistency analysis for some variants of the TLS~\cite{KH2004,KMH2005}.
However, the proof of strong consistency in the previous study
is restricted to the unique solution
or the minimal norm solution in the rank deficient case,
as far as the author knows.

The contribution of this paper is to prove strong consistency 
for a set of solutions to the TLS problem under reasonable assumptions.
More specifically, we prove the convergence of a subspace
containing any solution vector with probability one.
If the coefficient matrix $A$ is exact with no errors, it is easy to identify the solution set from the null space. However, in our problem setting, the coefficient matrix $A$ is also subject to perturbation, and thus the null space is unclear. Therefore, although it is possible to identify the null space by a method other than the TLS, this paper points out that the null space can be naturally identified from a subspace computed by the TLS.
The idea on using subspaces is basic and simple.
Similar discussions concerning
the range null space decomposition
are presented in~\cite[\S 3]{KMH2005}. However, 
this paper is the first to organize the theory into a rigorous proof of strong consistency.
The proof is based on properties of the Rayleigh-Ritz procedure for computing eigenvalues. 
This makes it suitable for targeting problems where the first $k$ row vectors of $A$ and $B$ do not contain noise. 
Therefore, we focus on the above TLS problem with rank constraints,
where the first $k$ rows of $A$ and $B$
are known exactly without noise.
Note that the special case of $k=0$
can be reduced to the standard TLS problem.

This paper is organized as follows.
The next section is
devoted to a description of the previous study to formulate our target problems.
In Section~\ref{sec:aim}, we present a regression model
and the TLS problem with rank constraints.
In Section~\ref{sec:mainresult},
we prove strong consistency of the estimator
by the TLS with rank constraints.
Finally, Section~\ref{sec:conclusion} gives the conclusion.

\section{Previous study}\label{sec:prev}
This section is devoted to a description of the previous study to formulate our target problems.
Throughout the paper, $I_{p}$ is a $p \times p$ identity matrix,
and $0_{p\times q}$ is a $p\times q$ zero matrix.
In addition, $\Nu(\cdot)$ and $\Ra(\cdot)$ mean the null space and the range of the argument matrix, respectively.

\subsection{Total least squares and an errors in variables regression model}
Given $A\in \R^{m\times n}$ and ${B} \in \R^{m\times \ell}$,
the total least squares (TLS) is to find ${X}_{\rm tls} \in \R^{n\times \ell}$
for the following optimization problem:
\begin{align}\label{eq:tls}
\min_{{X}_{\rm tls}, \Delta A, \Delta {B}} {\|\Delta A\|_{\rm F}}^{2}+{\|\Delta {B}\|_{\rm F}}^{2}
\quad {\rm such \ that}\  (A+\Delta A){X}_{\rm tls}={B}+\Delta {B}.
\end{align}

The solution vector ${X}_{\rm tls}$ can be obtained
by the singular value decomposition (SVD)
under reasonable assumptions~\cite{GV1980}
in view of the Eckart-Young-Mirsky theorem
that states the optimal low rank approximation
of the truncated SVD.
Here we describe how to compute ${X}_{\rm tls}$ 
with the use of the eigenvalue decomposition
to consider a statistical model.

First, we define $C$, $\Delta C$, and $Y$ such that
\begin{align}\label{eq:CY}
C=
\begin{bmatrix}
A & B 
\end{bmatrix}\in \R^{m\times (n+\ell)},
\quad
\Delta{C}=
\begin{bmatrix}
\Delta{A} & \Delta{B} 
\end{bmatrix}\in \R^{m\times (n+\ell)},
\quad
Y=
\begin{bmatrix}
-X_{\rm tls} \\ I_{\ell}
\end{bmatrix}\in \R^{(n+\ell)\times \ell}.
\end{align}
Then we have
\begin{align}\nonumber
(C+\Delta{C})Y=0_{m\times \ell}
\end{align}
from the constraint in the optimization problem of \eqref{eq:tls}.
For solving this optimization problem,
compute
\begin{align}\nonumber
F:=C^{\top}C\in \R^{(n+\ell)\times (n+\ell)}.
\end{align}
In addition, for $i=1,\ldots ,n+\ell$,
let $\lambda_{i}$ and $\vec{z}_{i}$ denote the eigenpairs, i.e., 
\begin{align}\nonumber
F\vec{z}_{i}=\lambda_{i}\vec{z}_{i}\ (i=1,\ldots ,n+\ell),\quad
(0\le )\lambda_{1}\le \cdots \le \lambda_{n+\ell}.
\end{align}
Moreover, for a part of the eigenpairs, 
let
\begin{align}\nonumber
\Lambda_{\ell}={\rm diag}(\lambda_{1}, \ldots , \lambda_{\ell}),\quad
Z_{\ell}=[\vec{z}_{1},\ldots ,\vec{z}_{\ell}].
\end{align}
Furthermore, let the eigenvector matrix $Z_{\ell}$ be divided into
\begin{align}\nonumber
Z_{\ell}=:
\begin{bmatrix}
Z_{\ell,{\rm upper}}
\\
Z_{\ell,{\rm lower}}
\end{bmatrix},\ 
Z_{\ell,{\rm upper}}\in \R^{n\times \ell},\
Z_{\ell,{\rm lower}}\in \R^{\ell \times \ell}.
\end{align}
Then we have
\begin{align}\nonumber
X_{\rm tls}=-Z_{\ell,{\rm upper}}{Z_{\ell,{\rm lower}}}^{-1},
\end{align}
where the theoretical background is described in~\cite{Gleser1981,GV1980} and~\cite[\S 6.3]{GV2013} based on the truncated SVD.

Although the TLS problems are not always solvable, conditions for solvability and 
their classification are discussed in~\cite{Hnetynkova2011}.
The classification is due to the SVD of $C$ as follows.
Let $C=:U\Sigma V^{\top}$ denote the SVD,
where $U,V$ are orthogonal matrices and, $\Sigma$
is a diagonal matrix: $\Sigma={\rm diag}(\sigma_{1},\ldots ,\sigma_{n+\ell})$
for $\sigma_{1}\geq \cdots \geq \sigma_{n+\ell}$.
Note that $V$ is an eigenvector matrix of $F=C^{\top}C$.
If $\sigma_{n}>\sigma_{n+1}$, then the TLS problem has a unique solution $X_{\rm tls}$.
In the case of $\sigma_{n}=\sigma_{n+1}$, the features of the solutions
are classified as in \cite{Hnetynkova2011}.
Under the assumption of our statistical problem setting,
the spectral gap associated with $\sigma_{n}>\sigma_{n+1}$
exists for sufficiently large $m$,
where such solvability conditions are satisfied.

Our statistical problem setting
relevant to \eqref{eq:tls}
is described as follows.
Let $\bar{A}\in \R^{m\times n}$ and $\bar{{B}}\in \R^{m\times \ell}$
denote parameters depending on $m$.
More specifically,
\begin{align*}
\bar{A}:=[\bar{\vec{a}}_{1},\ldots ,\bar{\vec{a}}_{m}]^{\top},\quad
\bar{{B}}:=[\bar{\vec{b}}_{1},\ldots ,\bar{\vec{b}}_{m}]^{\top},
\end{align*}
where $\bar{\vec{a}}_{i}\in \R^{n}$ and $\bar{\vec{b}}_{i}\in \R^{\ell}$
for $i=1,\ldots ,m$.
In the statistical sense,
$\bar{\vec{a}}_{i}\ (i=1,\ldots ,m)$
correspond to the explanatory variables,
$\bar{\vec{b}}_{i}\ (i=1,\ldots ,m)$
to the objective variables,
and $m$ to the sample size.
Then we assume
${\bar{\vec{a}}_{i}}{}^{\top}{X}=\bar{\vec{b}}_{i}{}^{\top}\ (i=1,\ldots m)$
for some ${X}\in \R^{n\times \ell}$.
The matrix $X$ corresponds to the weight parameters in the linear regression model. 
The errors-in-variables linear regression model
is formulated as
\begin{align}\label{eq:tlsregression}
\bar{A}{X}=\bar{{B}},\quad 
A=\bar{A}+E_{A},\quad 
{B}=\bar{{B}}+{E}_{{B}},
\end{align}
where all the elements of $E_{A}$ and ${E}_{{B}}$
are random variables, and only $A$ and ${B}$ can be observed.
Hence, letting
\begin{align}\label{eq:E}
E:=
\begin{bmatrix}
 E_{A} &  {E}_{{B}} 
\end{bmatrix},
\end{align}
we assume the following.
\begin{assumption}\label{as:E}
The rows of $E$ are i.i.d. with common mean row vector $\vec{0} \in \R^{n+\ell}$ and common covariance matrix $\sigma^2 I_{n+\ell}$. 
\end{assumption}

The above assumption is not restrictive in the following sense.
If the common covariance matrix is a general form $\Sigma \in \R^{(n+\ell)\times (n+\ell)}$,
we can reduce such a problem to the simple problem under Assumption~\ref{as:E}
with the use of the Cholesky factorization $LL^{\top}:=\Sigma$~\cite[\S 5]{Gleser1981}.
From the viewpoints of numerical computations,
since the generalized eigenvalue problem covers the general covariance matrix $\Sigma$,
it is possible to design an efficient algorithm avoiding numerical errors
as in~\cite{Huffel1989}.
Our purpose is asymptotic analysis in statistical sense. 
To construct efficient numerical methods is beyond the scope of this paper.
In the following, we perform consistency analysis under Assumption~\ref{as:E},
where the standard deviation $\sigma$ is unknown.

In addition, assume that
$\lim_{m\to \infty}m^{-1}\bar{A}^{\top}\bar{A}$ exists, and it is positive definite,
related to the uniqueness of ${X}$ in the asymptotic regime as $m \to \infty$.
Below we rewrite the assumption
for extending the regression model. 
Let
\begin{align}\label{eq:barC}
\bar{C}:=
\begin{bmatrix}
\bar{A} & \bar{{B}} 
\end{bmatrix}=
\begin{bmatrix}
\bar{A} & \bar{A}{X} 
\end{bmatrix}.
\end{align}
From easy calculations, we have
\begin{align*}
m^{-1}\bar{C}^{\top}\bar{C}=
\begin{bmatrix}
\ m^{-1}\bar{A}^{\top}\bar{A} & m^{-1}\bar{A}^{\top}\bar{A}{X} \\
\ m^{-1}{X}^{\top}\bar{A}^{\top}\bar{A} & m^{-1}{X}^{\top}\bar{A}^{\top}\bar{A}{X}
\end{bmatrix}.
\end{align*}
The assumption that $\lim_{m\to \infty}m^{-1}\bar{A}^{\top}\bar{A}$ is
positive definite is mathematically the same as the next assumption.
\begin{assumption}\label{as:CTC}
$\lim_{m\to \infty}m^{-1}\bar{C}^{\top}\bar{C}=:\bar{S}$ exists, and ${\rm rank}(\bar{S})=n$. 
\end{assumption}

Then we have the next theorem
that states the strong consistency of the estimation ${X}_{\rm tls}$ in \eqref{eq:tls}.
\begin{theorem}[Strong consistency~{\cite[Lemma~3.3]{Gleser1981}}]\label{thm:Gleser}
Under Assumptions~\ref{as:E} and~\ref{as:CTC},
we have
\begin{align*}
\lim_{m\to \infty}{X}_{\rm tls}={X} \ \text{with probability one},
\end{align*}
where ${X}_{\rm tls}$ is the solution of \eqref{eq:tls}.
\end{theorem}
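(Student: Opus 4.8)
The theorem asserts strong consistency of the TLS estimator, and the cleanest route is to reduce everything to the convergence of the eigenspace associated with the $\ell$ smallest eigenvalues of $m^{-1}C^{\top}C$. The plan is to first show that $m^{-1}C^{\top}C$ converges almost surely to an explicit limit, then identify the limiting eigenstructure, and finally deduce convergence of the relevant eigenvectors and hence of $X_{\rm tls}$.

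Let me sketch how I'd prove this.

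\bigskip

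First I would expand $m^{-1}C^{\top}C$ using $C = \bar{C} + E$. Writing this out gives
\begin{align*}
m^{-1}C^{\top}C = m^{-1}\bar{C}^{\top}\bar{C} + m^{-1}\bar{C}^{\top}E + m^{-1}E^{\top}\bar{C} + m^{-1}E^{\top}E.
\end{align*}
By Assumption~\ref{as:CTC} the first term tends to $\bar{S}$. The key analytic input is the strong law of large numbers applied to the rows of $E$: under Assumption~\ref{as:E} the rows are i.i.d. with mean $\vec{0}$ and covariance $\sigma^2 I_{n+\ell}$, so $m^{-1}E^{\top}E \to \sigma^2 I_{n+\ell}$ with probability one, and the cross terms $m^{-1}\bar{C}^{\top}E$ vanish almost surely (this uses that the rows of $\bar{C}$ have bounded empirical second moments, which follows from the convergence of $m^{-1}\bar{C}^{\top}\bar{C}$). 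Hence
\begin{align*}
\lim_{m\to\infty} m^{-1}C^{\top}C = \bar{S} + \sigma^2 I_{n+\ell} =: S \quad\text{with probability one}.
\end{align*}

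\bigskip

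Next I would analyze the eigenstructure of the limit $S$. Since ${\rm rank}(\bar{S}) = n$ and $\bar{S}$ is positive semidefinite of size $n+\ell$, its null space has dimension $\ell$; crucially, from the block form of $m^{-1}\bar{C}^{\top}\bar{C}$ the matrix $Y_0 := [-X^{\top}\ I_\ell]^{\top}$ spans $\Nu(\bar{S})$ because $\bar{C}Y_0 = \bar{A}X - \bar{B} = 0$. Therefore $\bar{S}$ has smallest eigenvalue $0$ with multiplicity exactly $\ell$, so $S = \bar{S} + \sigma^2 I$ has smallest eigenvalue $\sigma^2$ with multiplicity $\ell$, and the remaining $n$ eigenvalues are strictly larger (bounded below by $\sigma^2$ plus the smallest positive eigenvalue of $\bar{S}$). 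This produces a genuine spectral gap in the limit, which is what guarantees that the solvability condition $\sigma_n > \sigma_{n+1}$ holds for all sufficiently large $m$ almost surely. The $\ell$-dimensional bottom eigenspace of $S$ is exactly $\Ra(Y_0)$.

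\bigskip

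Finally I would invoke continuity of spectral projectors. Because $m^{-1}C^{\top}C \to S$ and $S$ has a spectral gap separating its $\ell$ smallest eigenvalues from the rest, standard eigenvalue perturbation theory (e.g.\ the Davis--Kahan $\sin\Theta$ bound or continuity of the Riesz projection over a fixed contour) gives that the orthogonal projector onto ${\rm span}(Z_\ell)$ converges almost surely to the projector onto $\Ra(Y_0)$. Consequently $\Ra(Z_\ell) \to \Ra(Y_0) = \Ra\!\left(\begin{bmatrix}-X\\ I_\ell\end{bmatrix}\right)$. Partitioning $Z_\ell$ into $Z_{\ell,{\rm upper}}$ and $Z_{\ell,{\rm lower}}$ as in the excerpt, convergence of the subspace forces $Z_{\ell,{\rm lower}}$ to become nonsingular with $Z_{\ell,{\rm upper}}Z_{\ell,{\rm lower}}^{-1} \to -X$, whence $X_{\rm tls} = -Z_{\ell,{\rm upper}}Z_{\ell,{\rm lower}}^{-1} \to X$ almost surely.

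\bigskip

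The main obstacle I anticipate is the last step: the eigenvectors themselves need not converge (they are only determined up to the action of the orthogonal group on the $\ell$-dimensional eigenspace, and in the limit this eigenspace is genuinely degenerate). The honest statement is convergence of the \emph{subspace} $\Ra(Z_\ell)$, not of $Z_\ell$ itself, so care is needed to pass from subspace convergence to convergence of the quotient $-Z_{\ell,{\rm upper}}Z_{\ell,{\rm lower}}^{-1}$. This passage is legitimate precisely because the limiting subspace $\Ra(Y_0)$ is a graph over the bottom $\ell$ coordinates (its lower block $I_\ell$ is invertible), which makes the ratio continuous near the limit; verifying that $Z_{\ell,{\rm lower}}$ stays bounded away from singularity for large $m$ is the delicate point.
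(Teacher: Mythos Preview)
Your argument is correct and mirrors the paper's own framework. The paper does not give a standalone proof of Theorem~\ref{thm:Gleser} (it is cited from Gleser), but its proofs of the generalizations---Lemma~\ref{lem:key} (almost sure convergence of $m^{-1}G$ to $\bar{T}+\sigma^{2}I$, using Lemma~\ref{lem:kolmogolov} for the cross terms), Lemma~\ref{lem:key2} (Davis--Kahan to pass to eigenvector convergence, handling the non-uniqueness of the eigenbasis exactly as you do via subspace convergence), and Corollary~\ref{cor:new} (recovery of the estimator from the converging subspace)---specialize to precisely your three steps when $k=0$, $P=I_{n+\ell}$, $r=n$; so your sketch is essentially the paper's approach restricted to the unconstrained full-rank case.
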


In general, if an estimator based on random variables
converges to a target value with probability one, the estimator has strong consistency.
The rigorous definition is described in the next section.
This section is devoted to a description of the previous study 
for extensions of the TLS problem and the corresponding consistency analysis.

\subsection{Rank deficient case}
Here we consider the case where
\begin{align}\label{eq:rankdeficientA}
{\rm rank}(\bar{A})=r\le n
\end{align}
for $\bar{A}$ in~\eqref{eq:tlsregression}.
This is the general case where the coefficient matrix $\bar{A}$ is not necessarily column full rank.
For the consistency analysis, we assume the following.
\begin{assumption}\label{as:CTC2}
$\lim_{m\to \infty}m^{-1}\bar{C}^{\top}\bar{C}=:\bar{S}$ exists. Let ${\rm rank}(\bar{S})=r\le n$. 
\end{assumption}
The corresponding TLS problem with the rank constraint
is as follows.
\begin{align}\label{eq:tls2}
\min_{{X}_{\rm tls}, \Delta A, \Delta {B}} {\|\Delta A\|_{\rm F}}^{2}+{\|\Delta {B}\|_{\rm F}}^{2}
\quad {\rm such \ that}\  (A+\Delta A){X}_{\rm tls}={B}+\Delta {B},\ {\rm rank}(A+\Delta A)=r.
\end{align}

\begin{remark}
The above rank constrained optimization problem~\eqref{eq:tls2} is the so called truncated TLS (TTLS) problem~\cite{FB1994,Huffel1987,MDB2021,Park2011}, a contrivance for $A$ with collinearity. In real TTLS, the rank is determined based on the singular value distribution of the matrix $A$ or $C=[A \ \ B ]$ and the nature of the singular vectors, which is not explicitly written in the optimization problem. However, in the statistical problem setting of this paper, the rank condition is explicitly stated in the optimization problem as above, because it is equivalent to estimating the rank with a certain value $r$.
\end{remark}

Due to the rank deficient matrix $A+\Delta A$,
the solution vector $X_{\rm tls}$ is not unique.
Thus, let $X_{\rm tls}^{*}$ denote the minimal norm solution.
Similarly, $X$ in~\eqref{eq:tlsregression} is not unique.
Let $X_{\min}$ denote the minimal norm solution.
Then we have the next theorem
that states the strong consistency
of the minimal norm solution.

\begin{theorem}[Strong consistency~{\cite[Lemma~4.6]{Park2011}}]\label{thm:park}
Under Assumptions~\ref{as:E} and~\ref{as:CTC2},
we have
\begin{align*}
\lim_{m\to \infty}{X}_{\rm tls}^{*}={X}_{\min} \ \text{with probability one},
\end{align*}
where ${X}_{\rm tls}^{*}$ and $X_{\min}$ are the minimal norm solutions of \eqref{eq:tls2}
and \eqref{eq:tlsregression}, respectively.
\end{theorem}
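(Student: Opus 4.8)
The plan is to reduce the statement to Gleser's Theorem~\ref{thm:Gleser} by first proving almost-sure convergence of the relevant invariant subspace of the sample Gram matrix $m^{-1}C^{\top}C$, and then recovering the minimal norm solution as a continuous function of that subspace. First I would control the Gram matrix. Writing $C=\bar{C}+E$ and expanding gives
\begin{align*}
m^{-1}C^{\top}C=m^{-1}\bar{C}^{\top}\bar{C}+m^{-1}\bar{C}^{\top}E+m^{-1}E^{\top}\bar{C}+m^{-1}E^{\top}E.
\end{align*}
By Assumption~\ref{as:E} and the strong law of large numbers, $m^{-1}E^{\top}E\to\sigma^{2}I_{n+\ell}$ with probability one, and Assumption~\ref{as:CTC2} gives $m^{-1}\bar{C}^{\top}\bar{C}\to\bar{S}$. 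The cross term $m^{-1}\bar{C}^{\top}E$ tends to the zero matrix with probability one; this is the delicate analytic point, since $\bar{C}$ depends on $m$, and I would argue as in~\cite{Gleser1981} using the uniform boundedness of $m^{-1}\bar{C}^{\top}\bar{C}$ (guaranteed by the existence of its limit) together with a Kolmogorov-type strong law applied entrywise. Combining these yields $\lim_{m\to\infty}m^{-1}C^{\top}C=\bar{S}+\sigma^{2}I_{n+\ell}$ with probability one.

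Next I would analyze the spectrum of the limit and deduce subspace convergence. Since $\mathrm{rank}(\bar{S})=r$ and $\bar{S}$ is symmetric positive semidefinite, writing its nonzero eigenvalues as $\mu_{1}\geq\cdots\geq\mu_{r}>0$, the matrix $\bar{S}+\sigma^{2}I_{n+\ell}$ has smallest eigenvalue $\sigma^{2}$ with multiplicity $n+\ell-r$, whose eigenspace is exactly $\Nu(\bar{S})$, separated from the remaining eigenvalues by the gap $\mu_{r}>0$. By this gap and the continuity of spectral projectors under perturbation, the orthogonal projector onto the invariant subspace of $m^{-1}C^{\top}C$ associated with its smallest $n+\ell-r$ eigenvalues converges, with probability one, to the orthogonal projector onto $\Nu(\bar{S})$; in particular this invariant subspace has the stable dimension $n+\ell-r$ for all large $m$, which legitimizes the rank-$r$ truncation in~\eqref{eq:tls2}.

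Finally I would pass from the subspace to the estimator. Both $X_{\rm tls}^{*}$ and $X_{\min}$ arise from the same construction: take a matrix whose columns span the relevant $(n+\ell-r)$-dimensional subspace, partition it into its upper $n$ rows $V_{\mathrm{u}}$ and lower $\ell$ rows $V_{\mathrm{l}}$, and set the minimal norm solution to $-V_{\mathrm{u}}V_{\mathrm{l}}^{+}$ with $V_{\mathrm{l}}^{+}$ the Moore--Penrose pseudoinverse; for $X_{\rm tls}^{*}$ the subspace is the small invariant subspace of $m^{-1}C^{\top}C$, and for $X_{\min}$ it is $\Nu(\bar{S})$. One checks that this map depends only on the subspace (not the chosen orthonormal basis) and recovers Gleser's formula when $r=n$. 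The hard part, and the essential difference from the unique-solution case, is that the pseudoinverse is discontinuous across rank changes, so I must verify that the lower block of any basis of $\Nu(\bar{S})$ has full row rank $\ell$. This holds because $\bar{S}\,[\,-X^{\top}\ \ I_{\ell}\,]^{\top}=0$ (inherited from $\bar{C}\,[\,-X^{\top}\ \ I_{\ell}\,]^{\top}=0$ and the definition of $\bar{S}$), so for every $v\in\R^{\ell}$ the vector $[\,-(Xv)^{\top}\ \ v^{\top}\,]^{\top}$ lies in $\Nu(\bar{S})$, whence the projection of $\Nu(\bar{S})$ onto its last $\ell$ coordinates is all of $\R^{\ell}$ and $V_{\mathrm{l}}$ has full row rank.

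Consequently the pseudoinverse, and hence the map from subspace to minimal norm solution, is continuous at the limiting configuration, and the almost-sure subspace convergence of the second step gives $\lim_{m\to\infty}X_{\rm tls}^{*}=X_{\min}$ with probability one. I expect the main obstacle to be precisely this stability of the minimal-norm selection: one needs both the stable subspace dimension $n+\ell-r$ and the full row rank of the lower block, which together upgrade mere subspace convergence into convergence of the selected solution; the vanishing of the cross term $m^{-1}\bar{C}^{\top}E$ is a secondary technical hurdle inherited from~\cite{Gleser1981}.
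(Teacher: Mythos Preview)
Your proposal is correct and follows essentially the same route as the paper. The paper does not give an independent proof of Theorem~\ref{thm:park} (it is cited from \cite{Park2011}), but its own machinery for the more general constrained problem specializes, at $k=0$ (so $P=I_{n+\ell}$ and $G=C^{\top}C$), to precisely your argument: Lemma~\ref{lem:key} is your Gram-matrix limit $m^{-1}C^{\top}C\to\bar S+\sigma^{2}I$ (with Lemma~\ref{lem:kolmogolov} handling the cross term $m^{-1}\bar C^{\top}E$), Lemma~\ref{lem:key2} is your subspace convergence via the spectral gap (the paper invokes Davis--Kahan), and the paper's discussion around \eqref{eq:ZMY}--\eqref{eq:ZXmin} is your verification that the lower $\ell\times(n+\ell-r)$ block has full row rank so that the Moore--Penrose map to the minimal norm solution is continuous at the limit, yielding Corollary~\ref{cor:new}.
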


The above theorem is a natural extension of
the full rank case to the rank deficient case.

\subsection{Constrained total least squares problems and their consistency analysis}
\label{ssec:ctls}
Here, we discuss another constraint in the TLS problem.
Let $A$ be divided into
\begin{align}\nonumber
A=\begin{bmatrix}
{A}_{1} \\
{A}_{2}
\end{bmatrix},\quad
{A}_{1}\in \R^{k\times n},\quad {A}_{2}\in \R^{(m-k)\times n}.
\end{align}
Similarly,
\begin{align}\nonumber
B=\begin{bmatrix}
{B}_{1} \\
{B}_{2}
\end{bmatrix},\quad
{B}_{1}\in \R^{k\times \ell},\quad {B}_{2}\in \R^{(m-k)\times \ell}.
\end{align}

We consider the following optimization problem:
\begin{align}\label{eq:ctls2}
\min_{{X}_{\rm ctls}, \Delta A, \Delta {B}} {\|\Delta A\|_{\rm F}}^{2}+{\|\Delta {B}\|_{\rm F}}^{2}
\quad {\rm such \ that}\  \begin{bmatrix}
{A}_{1} \\
 {A}_{2}+\Delta A_{2} 
\end{bmatrix}{X}_{\rm ctls}=
\begin{bmatrix}
B_{1}
\\
B_2+\Delta {B}_{2}
\end{bmatrix},
\end{align}
where the first $k$ rows of $A$ and $B$ are fixed.
This constrained TLS problem~\eqref{eq:ctls2} can be solved by the orthogonal
transformations; see~\cite{LJY2022} for the details.

Here we consider the corresponding regression model
described as follows:
\begin{align}\label{eq:newmodel}
\begin{bmatrix}
{A}_1 \\
\bar{A}_2 
\end{bmatrix}
{X}=\begin{bmatrix}
{B}_1 \\
\bar{B}_2 
\end{bmatrix}
,\quad 
A=\begin{bmatrix}
{A}_1 \\ 
\bar{A}_2+E_{A_2} 
\end{bmatrix}
,\quad 
{B}=\begin{bmatrix}
{B}_1 \\
\bar{B}_2+E_{B_{2}} 
\end{bmatrix},
\end{align}
where all the elements of $E_{A_2}$ and ${E}_{{B}_2}$
are random variables, and only $A$ and ${B}$ can be observed.
Related to this formulation, 
we assume ${\rm rank}(A_{1})=k<n$.
If not, we select independent rows in ${A}_{1}$ to recover $X$.
Thus, ${\rm rank}(A_{1})=k<n$ is not restrictive.
For the random matrices, let
\begin{align}\label{eq:E3}
E:=
\begin{bmatrix}
E_{A_2} & {E}_{{B}_2} 
\end{bmatrix}.
\end{align}
In addition, similarly to~\eqref{eq:barC}, let
\begin{align}\label{eq:barC2}
\bar{C}_{2}:=
\begin{bmatrix}
\bar{A}_2 & \bar{B}_2 
\end{bmatrix}.
\end{align}
Moreover, with the range null space decomposition in mind,
define $P\in \R^{(n+\ell)\times (n+\ell-k)}$ such that
\begin{align}\label{eq:P}
P^{\top}P=I_{n+\ell -k},
\quad
\begin{bmatrix}
A_1 & {B}_1 
\end{bmatrix}P=0_{k\times (n+\ell-k)}.
\end{align}
The matrix $P$ corresponds to the orthonormal basis vectors
of the orthogonal complementary subspace of $[A_1 \ \ {B}_1]$.
We extend Assumption~\ref{as:CTC} to the following form.
\begin{assumption}\label{as:PCCP}
$\lim_{m\to \infty}m^{-1}{P}^{\top}{\bar{C}_{2}}^{\top}\bar{C}_{2}{P}=:\bar{T}$ exists, and ${\rm rank}(\bar{T})=n-k$. 
\end{assumption}

Since the case of $k=0$ corresponds to $P=I$,
Assumption~\ref{as:PCCP} for $k=0$ is reduced to 
Assumption~\ref{as:CTC}.
The next theorem states the strong consistency of the estimator ${X}_{\rm ctls}$ in \eqref{eq:ctls2}.
\begin{theorem}[Strong consistency~{\cite[Theorem~2]{Aishima2022}}]\label{thm:Aishima}
For $\ell=1$ and a fixed $k$, 
under Assumptions~\ref{as:E} and~\ref{as:PCCP},
we have
\begin{align*}
\lim_{m\to \infty}{X}_{\rm ctls}={X} \ \text{with probability one},
\end{align*}
where ${X}_{\rm ctls}$ is the solution vector of \eqref{eq:ctls2}.
\end{theorem}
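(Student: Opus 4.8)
The plan is to recast the constrained TLS problem~\eqref{eq:ctls2} as a Rayleigh--Ritz eigenvalue problem on the subspace $\Ra(P)$, and then apply the strong law of large numbers to the projected Gram matrix. Setting $Y:=[-\trans{X_{\rm ctls}}\ 1]^{\top}$ (recall $\ell=1$), the exact constraint $[A_1\ B_1]Y=0$ forces $Y\in\Ra(P)$, so we may write $Y=P\vec{w}$ for some $\vec{w}\in\R^{n+1-k}$. For any fixed direction $Y$, the minimal $\normF{\Delta C_2}$ subject to $(C_2+\Delta C_2)Y=0$, where $C_2:=[A_2\ B_2]$, equals $\normtwo{C_2 Y}/\normtwo{Y}$ and is attained by the rank-one update $\Delta C_2=-C_2YY^{\top}/\normtwo{Y}^2$. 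Hence solving~\eqref{eq:ctls2} reduces to minimizing the Rayleigh quotient $\vec{w}^{\top}P^{\top}C_2^{\top}C_2P\vec{w}/(\vec{w}^{\top}\vec{w})$, using $P^{\top}P=I$; the optimizer $\vec{w}$ is the eigenvector of the smallest eigenvalue of $P^{\top}C_2^{\top}C_2P$, i.e.\ precisely the smallest Ritz pair of $C_2^{\top}C_2$ on $\Ra(P)$.

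Next I would analyze the asymptotics of $m^{-1}P^{\top}C_2^{\top}C_2P$. Writing $C_2=\bar{C}_2+E$ and expanding, the cross terms $m^{-1}P^{\top}(\bar{C}_2^{\top}E+E^{\top}\bar{C}_2)P$ vanish with probability one, and by Assumption~\ref{as:E} the strong law of large numbers gives $m^{-1}E^{\top}E\to\sigma^2 I_{n+\ell}$ almost surely (note that $m-k\to\infty$ for fixed $k$). Combined with Assumption~\ref{as:PCCP}, this yields
\begin{align*}
m^{-1}P^{\top}C_2^{\top}C_2P \longrightarrow \bar{T}+\sigma^2 I_{n+\ell-k}
\quad\text{with probability one}.
\end{align*}

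The crucial structural step is to pin down the spectrum of the limit. Since $\mathrm{rank}(\bar{T})=n-k$ while $\bar{T}$ acts on $\R^{n+1-k}$, the null space $\Nu(\bar{T})$ is exactly one-dimensional; hence $\bar{T}+\sigma^2 I$ has a \emph{simple} smallest eigenvalue $\sigma^2$, strictly separated from the remaining ones. Its eigenvector is the null vector $\vec{w}_0$ of $\bar{T}$, and because the true model satisfies $\bar{C}_2[-\trans{X}\ 1]^{\top}=\zeros$ and $[A_1\ B_1][-\trans{X}\ 1]^{\top}=\zeros$, one checks $P\vec{w}_0=[-\trans{X}\ 1]^{\top}$. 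This simple-eigenvalue separation is the key that makes the argument work, and it is exactly where the rank-$(n-k)$ condition in Assumption~\ref{as:PCCP} is indispensable.

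Finally, I would invoke continuity of a simple eigenvector under perturbation: because the smallest eigenvalue of the limit is isolated, standard eigenvector perturbation theory ensures that the smallest eigenvector $\ap{\vec{w}}_m$ of $m^{-1}P^{\top}C_2^{\top}C_2P$ converges, up to sign and scaling, to $\vec{w}_0$ almost surely. Therefore $\ap{Y}_m:=P\ap{\vec{w}}_m\to[-\trans{X}\ 1]^{\top}$; normalizing so that the last entry equals $1$ (legitimate for large $m$ since it converges to $1$) and reading off the first $n$ entries gives $X_{\rm ctls}\to X$ with probability one. The main obstacle I anticipate is the careful handling of the spectral-gap and eigenvector-continuity step --- in particular, verifying that the simple smallest eigenvalue persists almost surely for large $m$ and that the normalization by the last component is eventually well defined --- rather than the law-of-large-numbers computation, which is routine.
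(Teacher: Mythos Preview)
Your proposal is correct and follows essentially the same approach as the paper's general framework in Section~\ref{sec:mainresult}: reduce the constrained TLS to an eigenproblem for $G=P^{\top}C_2^{\top}C_2P$, establish $m^{-1}G\to\bar{T}+\sigma^2 I_{n+\ell-k}$ almost surely via the strong law of large numbers (the paper's Lemma~\ref{lem:key}, with the cross terms handled by Lemma~\ref{lem:kolmogolov}), exploit the spectral gap coming from $\mathrm{rank}(\bar{T})=n-k$, and then invoke eigenvector perturbation (the paper uses the Davis--Kahan theorem in Lemma~\ref{lem:key2}) to conclude convergence of the Ritz vector $P\vec{w}$ to $[-X^{\top}\ 1]^{\top}$. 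Your explicit rank-one formula for the optimal $\Delta C_2$ is a nice justification of the Rayleigh-quotient reformulation that the paper leaves implicit.
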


For the general $\ell \ge 1$, we prove the strong consistency below.
In such a multidimensional case,
the condition number and the solvability for the row constraint~\eqref{eq:ctls2} 
are given in detail in~\cite{LJ2022,LJY2022}.
The contribution of this study 
is to prove strong consistency
of $X_{\rm ctls}$ in the statistical sense
in the more general cases
under reasonable assumptions.

Regarding the consistency analysis,
the following open questions remained unresolved
in the previous study.

\begin{itemize}
\item
In the rank deficient case $r < n$ in~\eqref{eq:rankdeficientA},
the consistency of a set of the solution $X$ is unknown,
though the minimal norm solution is the consistent estimator
as in Theorem~\ref{thm:park} \cite[Lemma~4.6]{Park2011}

\item
For the regression model with error free parts $A_{1}$ and $B_{1}$ as in \eqref{eq:newmodel},
the rank deficient case of the coefficient matrix  $\left[ A_{1}{}^{\top} \ \ \bar{A}_{2}{}^{\top} \right]^{\top}$ is not discussed

\end{itemize}

This paper is a unified answer to the above two questions. We prove the strong consistency of an estimator for obtaining a set of $X$, and hence the strong consistency of the minimal norm solution is clear.

\section{Formulation of a statistical model related to the optimization problem}\label{sec:aim}
With the research background in the previous section, 
we formulate the target regression model
to construct an estimator with consistency analysis. 
Basically, we generalize the discussions in Section~\ref{ssec:ctls},
focusing in particular on the conditions regarding the rank of the coefficient matrix.
Hence, for the linear regression model~\eqref{eq:newmodel}, we assume
\begin{align}\label{eq:A11A12}
\bar{A}:=
\begin{bmatrix}
{A}_{1} \\ 
\bar{A}_{2}
\end{bmatrix},
\quad
{\rm rank}
\left(
\bar{A}
\right)
=r \le n.
\end{align}
Then the solution $X$ in~\eqref{eq:newmodel} is not unique.
Below is a detailed analysis of the properties of $X$.

Without loss of generality, for ${A}_{1}\in \R^{k\times n}$,
we assume
\begin{align}\nonumber
{\rm rank}\left(
{A}_{1} 
\right)
=k<r.
\end{align}
In other words, ${A}_{1}$ is a row full rank rectangular matrix.
If not, we select independent rows in $A_{1}$ to recover $X$.
Thus, the rank requirement above is not restrictive.

In the rank deficient case $r<n$ in~\eqref{eq:A11A12},
 $X$ in~\eqref{eq:newmodel} is not unique and the solution set constitutes 
a certain subspace.
Similarly to ${Y}$ in~\eqref{eq:CY}, for $r<n$, 
with the range null space decomposition of $\bar{A}$ in mind,
we introduce $\bar{Y}$ to
express $X$ as in the next lemma.

\begin{lemma}\label{lem:barYX}
Define $W\in\R^{n \times (n-r)}$ and $\bar{Y}\in \R^{(n+\ell)\times (n+\ell -r)}$
such that
\begin{align}\label{eq:subspace}
\Ra(W)=\Nu(\bar{A}) \Longleftrightarrow
\bar{A}W=
\begin{bmatrix}
{A}_{1} \\ 
\bar{A}_{2}
\end{bmatrix}
W=0_{m\times (n-r)},
\quad
\bar{Y}:=
\begin{bmatrix}
-X_{\min} & W \\
 I_{\ell} & 0_{\ell \times (n-r)}
\end{bmatrix},\quad
\Ra(\bar{Y})=\Nu \left(
\begin{bmatrix}
{A}_{1} & {B}_{1} \\ 
\bar{A}_{2} & \bar{B}_{2}
\end{bmatrix}
\right),
\end{align}
where $X_{\min}$ is the minimal norm solution.
Then any solution $X$ can be given by
\begin{align}\label{eq:anyX}
L\in \R^{(n-r)\times \ell},\quad 
X=X_{\min}+WL.
\end{align}
\end{lemma}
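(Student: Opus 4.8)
The plan is to establish the two claims of the lemma separately, using only elementary rank and null-space arguments together with the two defining identities $\bar{A}W = 0_{m\times(n-r)}$ and $\bar{A}X_{\min} = \bar{B}$, where $\bar{A} = [A_1^{\top}\ \bar{A}_2^{\top}]^{\top}$ and $\bar{B} = [B_1^{\top}\ \bar{B}_2^{\top}]^{\top}$.

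First I would prove the parametrization \eqref{eq:anyX}. If $X$ is any solution of \eqref{eq:newmodel}, then $\bar{A}X = \bar{B} = \bar{A}X_{\min}$, so $\bar{A}(X - X_{\min}) = 0_{m\times\ell}$; that is, every column of $X - X_{\min}$ lies in $\Nu(\bar{A}) = \Ra(W)$. Hence $X - X_{\min} = WL$ for some $L \in \R^{(n-r)\times\ell}$, giving $X = X_{\min} + WL$. Conversely, for any such $L$ we have $\bar{A}(X_{\min} + WL) = \bar{B} + (\bar{A}W)L = \bar{B}$, so $X_{\min} + WL$ is again a solution, which confirms \eqref{eq:anyX}.

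Next I would prove the subspace identity $\Ra(\bar{Y}) = \Nu(\bar{C})$, where $\bar{C} := \begin{bmatrix} A_1 & B_1 \\ \bar{A}_2 & \bar{B}_2 \end{bmatrix} = [\bar{A}\ \bar{B}]$ is the matrix appearing in \eqref{eq:subspace}. The inclusion $\Ra(\bar{Y}) \subseteq \Nu(\bar{C})$ follows from a direct block computation: the first $\ell$ columns give $\bar{C}\,[-X_{\min}^{\top}\ I_{\ell}]^{\top} = -\bar{A}X_{\min} + \bar{B} = 0$, while the remaining $n-r$ columns give $\bar{C}\,[W^{\top}\ 0]^{\top} = \bar{A}W = 0$, so that $\bar{C}\bar{Y} = 0_{m\times(n+\ell-r)}$. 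For the reverse inclusion I would match dimensions. Because $\bar{B} = \bar{A}X$, the columns of $\bar{B}$ lie in $\Ra(\bar{A})$, whence $\Ra(\bar{C}) = \Ra(\bar{A})$ and $\mathrm{rank}(\bar{C}) = r$; the rank--nullity theorem then gives $\dim\Nu(\bar{C}) = (n+\ell) - r$. On the other side, I would show that $\bar{Y}$ has full column rank $n+\ell-r$ by exploiting its block structure: if $\bar{Y}\,[v_1^{\top}\ v_2^{\top}]^{\top} = 0$ with $v_1 \in \R^{\ell}$ and $v_2 \in \R^{n-r}$, then the bottom $\ell$ block rows force $v_1 = 0$, after which the top block rows force $Wv_2 = 0$, whence $v_2 = 0$ since $W$ has full column rank (its columns form a basis of $\Nu(\bar{A})$, of dimension $n-r$). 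Therefore $\dim\Ra(\bar{Y}) = n+\ell-r = \dim\Nu(\bar{C})$, and combined with the inclusion already established this yields $\Ra(\bar{Y}) = \Nu(\bar{C})$.

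The argument is essentially routine linear algebra, and I do not anticipate a serious obstacle. The only point requiring care is the dimension count, specifically the observation that $\mathrm{rank}(\bar{C}) = \mathrm{rank}(\bar{A}) = r$ rather than something larger; this hinges on the exact consistency $\bar{B} = \bar{A}X$ of the noiseless model, which guarantees that the columns of $\bar{B}$ add no new directions to the range. Once that is in hand, the full-column-rank property of $\bar{Y}$ and the equality of the two subspaces follow immediately from the inclusion plus equality of dimensions.
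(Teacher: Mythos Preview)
The paper states this lemma without proof, treating it as an elementary observation (the text following the lemma merely records that $\mathrm{rank}(W)=n-r$ and discusses the special case $r=n$). Your argument is correct and supplies exactly the routine linear-algebra verification that the paper omits: the parametrization of all solutions via $\Nu(\bar{A})$, the inclusion $\Ra(\bar{Y})\subseteq\Nu(\bar{C})$ by direct block multiplication, and the equality of dimensions using $\mathrm{rank}(\bar{C})=\mathrm{rank}(\bar{A})=r$ (which, as you note, relies on $\bar{B}=\bar{A}X$) together with the full column rank of $\bar{Y}$.
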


In the above lemma, ${\rm rank}(W)=n-r$.
In the case of $r=n$,
${X}$ is unique, and thus
$\bar{Y}$ is uniquely determined without $W$.
Note that since the coefficient matrix $\bar{A}$ is unknown,
$W$ is also unknown, which makes the problem not easy.
For the convergence analysis, the range $\Ra(\bar{Y})$ plays an important role,
leading to any $X$. 

Similar to~Assumption~\ref{as:PCCP}, we assume the following,
including the condition that the rank is $r$ .

\begin{assumption}\label{as:PCCP2}
$\lim_{m\to \infty}m^{-1}{P}^{\top}{\bar{C}_{2}}^{\top}\bar{C}_{2}{P}=:\bar{T}$ exists, and ${\rm rank}(\bar{T})=r-k$. 
\end{assumption}

For estimating $X$, we consider the following optimization problem:
\begin{align}\label{eq:ctls2rank}
\min_{{X}_{\rm ctls}, \Delta A_{2}, \Delta {B}_{2}} {\|\Delta A_{2}\|_{\rm F}}^{2}+{\|\Delta {B}_{2}\|_{\rm F}}^{2}
\quad {\rm such \ that}\  \begin{bmatrix}
{A}_{1} \\
 {A}_{2}+\Delta A_{2} 
\end{bmatrix}{X}_{\rm ctls}=
\begin{bmatrix}
B_{1}
\\
B_2+\Delta {B}_{2}
\end{bmatrix},\quad 
{\rm rank}\left(
\begin{bmatrix}
{A}_{1} \\ 
{A}_{2}+\Delta A_{2} 
\end{bmatrix}
\right)=r.
\end{align}

Below we consider the above estimator $X_{\rm ctls}$
for the linear regression model as in~\eqref{eq:newmodel}.

\begin{remark}
For the standard linear regression model,
without errors-in-variables,
there is a rank deficient case for $X$
as in~\cite{MCWZ2015}. 
If we formulate such a situation, 
the optimization problem is
\begin{align}\nonumber
\min_{{X}_{\rm ctls}, \Delta A_{2}, \Delta {B}_{2}} {\|\Delta A_{2}\|_{\rm F}}^{2}+{\|\Delta {B}_{2}\|_{\rm F}}^{2}
\quad {\rm such \ that}\  \begin{bmatrix}
{A}_{1} \\
 {A}_{2}+\Delta A_{2} 
\end{bmatrix}{X}_{\rm ctls}=
\begin{bmatrix}
B_{1}
\\
B_2+\Delta {B}_{2}
\end{bmatrix},\quad 
{\rm rank}\left( X_{\rm ctls} \right)=r<\ell.
\end{align}
The corresponding linear regression model
is to include the rank constraint 
${\rm rank}\left( X \right)=r$
into \eqref{eq:newmodel}.

In fact, the estimator $X_{\rm ctls}$ in \eqref{eq:ctls2rank}
for the regression model in \eqref{eq:newmodel}
has strong consistency
even if the above rank constraint for $X$ exists.
Noting Lemma~\ref{lem:barYX}, we see that
$\bar{Y}$ in~\eqref{eq:subspace}
is a full column rank matrix,
even if ${\rm rank}(X_{\min})<\ell$.
Such a rank constraint does not matter for the proof of strong consistency.
Below we prove that the consistency of $X_{\rm ctls}$
does not depend on any constraint of $X$.
Since this study focuses on the consistency
without the discussion of the accuracy of the estimator,
the detailed analysis for the above reduced rank estimator 
is beyond the scope of this paper.
\end{remark}

For the statistical asymptotic analysis, 
we define some general statistical terms and concepts exactly.
In general, the strong convergence of random variables
is defined as follows.

\begin{definition}[Strong convergence]\label{def:sc}
The sequence of random variables $\mathcal{X}^{(m)}$
for $m=0,1,\ldots$ converges to $\mathcal{X}$ with probability one
if and only if
\begin{align*}
\mathcal{P}(\{ \omega \in \Omega \mid \lim_{m\to \infty}\mathcal{X}^{(m)}(\omega)=\mathcal{X}(\omega) \})=1,
\end{align*}
where $\mathcal{P}$ is a probability measure on a sample space $\Omega$.
\end{definition}

In this paper, the random variable argument $\omega$ is omitted for simplicity.
Let $\widehat{\mathcal{X}}^{(m)}$ denote 
an estimator of a target constant value $\theta$
with the use of a sample of size $m$.
In other words, 
$\mathcal{X}(\omega)$ in Definition~\ref{def:sc}
at the limit point is a constant $\theta$ independent of $\omega$.
If $\widehat{\mathcal{X}}^{(m)}$
converges to $\theta$ with probability one,
the estimator $\widehat{\mathcal{X}}^{(m)}$
is said to be strongly consistent.
Then $\widehat{\mathcal{X}}^{(m)}$ has the strong consistency
as in the following definition.

\begin{definition}[Strong consistency]\label{def:sconsist}
Suppose that the sequence of random variables $\widehat{\mathcal{X}}^{(m)}$
for $m=0,1,\ldots$ converges to a constant $\theta$ with probability one.
Then $\widehat{\mathcal{X}}^{(m)}$ is called an estimator of $\theta$
with strong consistency.
\end{definition}

We aim to prove that $X_{\rm ctls}$ has the strong consistency as $m\to \infty$. 
For the consistency analysis of $X_{\rm ctls}$, we use the next lemma,
essentially proved in~\cite[Lemma~3.1]{Gleser1981}.
The lemma is also due to the Kolmogolov strong law of large numbers.

\begin{lemma}\label{lem:kolmogolov}
For a sequence of real numbers $\{\alpha_{i}\}_{i=1}^{\infty}$
and any integer $m\ge 1$, suppose that
$m^{-1}\sum_{i=1}^{m}{\alpha_{i}}^2$ are bounded, i.e.,
$m^{-1}\sum_{i=1}^{m}{\alpha_{i}}^2 < \infty$ for all $m$.
In addition,
let $\{\epsilon^{(i)}\}_{i=1}^{\infty}$ denote a sequence of 
independent and identically distributed (i.i.d.) random variables
with $\mathbb{E}(\epsilon^{(i)})=0$ for every $i$,
where $\mathbb{E}$ means the expectation of random variables.
Moreover, suppose that, for all $i$, 
the common variance $\sigma^2:=\mathbb{E}({\epsilon^{(i)}}^{2})$
is bounded.
Then 
\begin{align}\label{eq:keyslln}
\lim_{m\to \infty}m^{-1}\sum_{i=1}^{m}\alpha_{i}\epsilon^{(i)}=0 \ \text{with probability one}.
\end{align}
\end{lemma}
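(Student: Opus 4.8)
The plan is to recognize the left-hand side of \eqref{eq:keyslln} as the Cesàro average of the independent, mean-zero random variables $X_i := \alpha_i \epsilon^{(i)}$ and to invoke Kolmogorov's strong law of large numbers for independent (but in general not identically distributed) summands. Since the $\epsilon^{(i)}$ are i.i.d. and the $\alpha_i$ are deterministic, the $X_i$ are independent with $\mathbb{E}(X_i) = \alpha_i \mathbb{E}(\epsilon^{(i)}) = 0$ and $\mathrm{Var}(X_i) = \alpha_i^2 \sigma^2$. Kolmogorov's criterion states that if independent random variables $X_i$ have mean zero and satisfy $\sum_{i=1}^{\infty} i^{-2}\mathrm{Var}(X_i) < \infty$, then $m^{-1}\sum_{i=1}^m X_i \to 0$ with probability one. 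Hence the entire proof reduces to verifying this single convergent-series condition.

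The hypothesis provides control only on the running averages $m^{-1}\sum_{i=1}^m \alpha_i^2$, which I read as uniformly bounded by some constant $M$; that is, writing $s_m := \sum_{i=1}^m \alpha_i^2$ (with $s_0 := 0$), one has $s_m \le Mm$ for all $m$. The task is therefore to pass from this Cesàro bound on the $\alpha_i^2$ to the weighted series $\sum_i i^{-2}\alpha_i^2$. First I would apply summation by parts (Abel summation) with $\alpha_i^2 = s_i - s_{i-1}$, giving
\begin{align}\nonumber
\sum_{i=1}^{n}\frac{\alpha_i^2}{i^2}
= \frac{s_n}{n^2} + \sum_{i=1}^{n-1} s_i\left(\frac{1}{i^2}-\frac{1}{(i+1)^2}\right).
\end{align}
The boundary term satisfies $s_n/n^2 \le M/n \to 0$, and for the remaining sum I would use $\frac{1}{i^2}-\frac{1}{(i+1)^2} = \frac{2i+1}{i^2(i+1)^2} \le \frac{3}{i(i+1)^2}$ together with $s_i \le Mi$, so that each summand is dominated by $3M(i+1)^{-2}$. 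Since $\sum_i (i+1)^{-2}$ converges, letting $n \to \infty$ yields $\sum_{i=1}^{\infty} i^{-2}\alpha_i^2 < \infty$, and multiplying by $\sigma^2$ gives $\sum_{i=1}^{\infty} i^{-2}\mathrm{Var}(X_i) < \infty$, as required.

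With the Kolmogorov variance condition established, the strong law immediately delivers $m^{-1}\sum_{i=1}^m \alpha_i \epsilon^{(i)} \to 0$ with probability one, which is exactly \eqref{eq:keyslln}. I expect the summation-by-parts estimate to be the main obstacle: the delicate point is that the assumption bounds only the averages of the $\alpha_i^2$, not the individual terms (which may well be unbounded), so one cannot simply dominate $i^{-2}\alpha_i^2$ termwise. The Abel rearrangement is precisely the device that converts the available Cesàro control into the summability demanded by Kolmogorov's criterion, and checking that the boundary term vanishes while the rearranged series converges is the crux of the argument.
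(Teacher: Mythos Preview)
Your proposal is correct and follows essentially the same route as the paper: both arguments apply Abel summation to the partial sums $s_m=\sum_{i\le m}\alpha_i^2$ to convert the uniform Ces\`aro bound into $\sum_{i\ge 1} i^{-2}\alpha_i^2<\infty$, and then invoke Kolmogorov's strong law for independent mean-zero summands with $\sum_i i^{-2}\mathrm{Var}(\alpha_i\epsilon^{(i)})<\infty$. The only differences are cosmetic---your explicit bound $\tfrac{2i+1}{i^2(i+1)^2}\le \tfrac{3}{i(i+1)^2}$ versus the paper's factorization $\tfrac{2i+1}{i(i+1)^2}\cdot i^{-1}\beta_i$---and your Abel identity even carries the correct sign, whereas the paper's displayed formula has a harmless sign typo.
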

\begin{proof}
Noting Abel's partial summation formula, we define $\beta_{i} \ (i=0,1,\ldots)$ such that
\begin{align*}
\beta_{0}=0,\quad \beta_{m}=\sum_{i=1}^{m}{\alpha_{i}}^{2}\quad (m=1,2,\ldots),
\end{align*}
leading to
\begin{align}\label{eq:beta}
m^{-1}\beta_{m}<\infty \quad (m=1,2,\ldots)
\end{align}
from the assumption that $m^{-1}\sum_{i=1}^{m}{\alpha_{i}}^2$ are bounded.
Then we have
\begin{align*}
\sum_{i=1}^{m}i^{-2}{\alpha_{i}}^{2}=
\sum_{i=1}^{m}i^{-2}(\beta_{i}-\beta_{i-1})=
\sum_{i=1}^{m}i^{-2}\beta_{i}-\sum_{i=0}^{m-1}(i+1)^{-2}\beta_{i}=
m^{-2}\beta_{m}-\sum_{i=1}^{m-1}(i^{-2}-(i+1)^{-2})\beta_{i}.
\end{align*}
The second term on the right-hand side of the above equation is
$\sum_{i=1}^{m-1}(i^{-1}(i+1)^{-2}(2i+1))i^{-1}\beta_{i}$.
In addition,
\eqref{eq:beta} and 
$\sum_{i=1}^{m-1}i^{-1}(i+1)^{-2}(2i+1)<\infty$
result in $\sum_{i=1}^{m}i^{-2}{\alpha_{i}}^{2}<\infty$.
Thus, from the assumption that $\mathbb{E}(\epsilon^{(i)}{}^{2})=\sigma^2<\infty \ (i=1,2,\ldots )$, 
we obtain
\begin{align*}
\sum_{i=1}^{m}i^{-2}\mathbb{E}((\alpha_{i}\epsilon^{(i)})^{2})=
\sigma^2\sum_{i=1}^{m}i^{-2}{\alpha_{i}}^{2}<\infty,
\end{align*}
yielding \eqref{eq:keyslln}
by \cite[eq. (7) in Corollary to Theorem~5.4.1]{Chung2001}.
\end{proof}

Under the above mathematical preparation,
we prove strong consistency in the next section.

\section{Consistency analysis for the CTLS solution}\label{sec:mainresult}
In this section, we prove strong consistency,
including the rank deficient case, which causes the lack of the uniqueness of $X$.
Since the estimator $X_{\rm ctls}$ is the solution of \eqref{eq:ctls2rank},
$X_{\rm ctls}$ is not unique, and hence, the meaning of the consistency is unclear.
First of all, to clarify its meaning,
we review the optimization algorithm,
focusing on the feature of projection,
as in~\cite{Demmel1987,LJY2022}.

\subsection{Projection algorithm for the optimization}
Similarly to~\eqref{eq:barC2}, let
\begin{align}\label{eq:C2}
{C}_{2}:=
\begin{bmatrix}
{A}_2 & {B}_2 
\end{bmatrix}.
\end{align}
Using the above $C_{2}$ and $P$ in \eqref{eq:P}, we compute $G$ such that
\begin{align}\label{eq:defG}
G=P^{\top}{{C}_{2}}^{\top}C_{2}P \in \R^{(n+\ell -k )\times (n+\ell -k )}.
\end{align}
In addition, let $V  \in \R^{(n+\ell -k )\times (n+\ell -r)}$ 
denote an eigenvector matrix comprising
the orthonormal eigenvectors corresponding to the smallest $n+\ell -r$
eigenvalues of $G$. Defined on the basis of the SVD, 
the column vectors of $V$ are the right singular vectors of $C_{2}P$
corresponding to its smallest singular values.

Using $V$, we define
\begin{align}\label{eq:Z}
Z=PV,\quad
Z=:
\begin{bmatrix}
Z_{{\rm upper}}
\\
Z_{{\rm lower}}
\end{bmatrix},\ 
Z_{{\rm upper}}\in \R^{n\times (n+\ell -r)},\
Z_{{\rm lower}}\in \R^{\ell \times (n+\ell -r)}.
\end{align}
Then, for $Z_{{\rm lower}}\in \R^{\ell \times (n+\ell -r)}$,
the Moore-Penrose generalized matrix inverse $Z_{{\rm lower}}{}^{\dagger}$
is often applied due to $\ell \le n+\ell -r$.
As proved in the following sections, 
$Z_{{\rm lower}}$ is a row full rank matrix for sufficiently large $m$
with probability one.
Thus, ${Z_{{\rm lower}}}^{\dagger}$ can be computed by
\begin{align}\nonumber
{Z_{{\rm lower}}}^{\dagger}={Z_{{\rm lower}}}^{\top}({Z_{{\rm lower}}}{Z_{{\rm lower}}}^{\top})^{-1}.
\end{align}
Then we obtain
\begin{align}\label{eq:Xctls*}
Z{Z_{{\rm lower}}}^{\dagger}=
\begin{bmatrix}
Z_{{\rm upper}}{Z_{{\rm lower}}}^{\dagger}
\\
I_{\ell}
\end{bmatrix},\quad
X_{\rm ctls}^{*}=-Z_{{\rm upper}}{Z_{{\rm lower}}}^{\dagger},
\end{align}
where $X_{\rm ctls}^{*}$ is the minimal norm solution.

The above computational procedure is summarized as follows.

\begin{itemize}
\item
Form $G$ in \eqref{eq:defG} 

\item
Compute the eigenvector matrix $V$ 
corresponding to the $n+\ell -r$ smallest 
eigenvalues of $G$

\item
Compute $Z$ in \eqref{eq:Z}, yielding $X_{\rm ctls}^{*}$ as in \eqref{eq:Xctls*}
\end{itemize}

The computation of $Z=PV$ in \eqref{eq:Z} corresponds to 
the Rayleigh-Ritz procedure~\cite{Saad2011} for $C_{2}{}^{\top}C_{2}$ using $\Ra(P)$,
where $C_{2}$ and $P$ are defined as in \eqref{eq:C2} and \eqref{eq:P}, respectively.
In other words, the column vectors of $Z$
are the Ritz vectors.
Our goal is to derive convergence as a set of solutions, 
not just a minimal norm solution, 
and to that end we analyze $\Ra(Z)$,
which converges in some sense to $\Ra(\bar{Y})$ in \eqref{eq:subspace}, in the following discussions.

\subsection{Deterministic asymptotic analysis without random noise}
The computed matrix $Z$ in \eqref{eq:Z} plays an important role
in the convergence analysis.
Here we consider $\bar{Z}\in \R^{(n+\ell)\times (n+\ell -r)}$
corresponding to the noiseless matrix data.

For the definition of $\bar{Z}$, let
\begin{align}\label{eq:defGbar}
\bar{G}=P^{\top}{\bar{C}_{2}}{}^{\top}\bar{C}_{2}P \in \R^{(n+\ell -k )\times (n+\ell -k )}
\end{align}
in the same manner as $G$ in \eqref{eq:defG},
where $\bar{C}_{2}$ and $P$ are defined as in \eqref{eq:barC2} and \eqref{eq:P}.
Under Assumption~\ref{as:PCCP2}, we have
${\rm rank}(\bar{G})=r-k$ for all sufficiently large $m$.
Hence, the dimension of $\Nu(\bar{G})$ is $n+\ell -r$,
implying that $\bar{G}$ has the $n+\ell -r$ eigenvectors
corresponding to the zero eigenvalues.
Let $\bar{V} \in \R^{(n+\ell -k )\times (n+\ell -r)}$ 
denote an eigenvector matrix comprising
the orthonormal eigenvectors corresponding to $\Nu(\bar{G})$, i.e., the null space of $\bar{G}$.
Similarly to \eqref{eq:Z}, let
\begin{align}
\bar{Z}=P\bar{V},\quad
\bar{Z}=:
\begin{bmatrix}
\bar{Z}_{{\rm upper}}
\\
\bar{Z}_{{\rm lower}}
\end{bmatrix},\ 
\bar{Z}_{{\rm upper}}\in \R^{n\times (n+\ell -r)},\
\bar{Z}_{{\rm lower}}\in \R^{\ell \times (n+\ell -r)}.
\end{align}
Below we consider $\bar{Z}$,
related to $\bar{Y}$ in \eqref{eq:subspace}.

First of all, we have
\begin{align}
\begin{bmatrix}
{A}_{1} & {B}_{1}
\\
\bar{A}_{2} & \bar{B}_{2}
\end{bmatrix}\bar{Z}
=
\begin{bmatrix}
{A}_{1} & {B}_{1}
\\
\bar{A}_{2} & \bar{B}_{2}
\end{bmatrix}P\bar{V}
=0_{m\times (n+\ell -r)},
\quad
{\rm rank}(\bar{Z})=n+\ell -r
\end{align}
due to the features of $P$ in \eqref{eq:P} and 
the eigenvector matrix $\bar{V}$ corresponding to $\Nu(\bar{G})$.
Combined this with \eqref{eq:subspace},
we have
\begin{align}\label{eq:RaZYNuA}
\Ra(\bar{Z})=\Ra(\bar{Y})=
\Nu \left(
\begin{bmatrix}
{A}_{1} & {B}_{1}
\\
\bar{A}_{2} & \bar{B}_{2}
\end{bmatrix}\right).
\end{align}

Noting $\Ra(\bar{Z})=\Ra(\bar{Y})$,
we have some nonsingular matrix $M$ such that
\begin{align}\label{eq:ZMY}
\begin{bmatrix}
\bar{Z}_{{\rm upper}}
\\
\bar{Z}_{{\rm lower}}
\end{bmatrix}M
=
\begin{bmatrix}
-X_{\min} & W \\
 I_{\ell} & 0_{\ell \times (n-r)}
\end{bmatrix}.
\end{align}
Thus, $\bar{Z}_{{\rm lower}}$
is the row full rank, i.e., ${\rm rank}(\bar{Z}_{{\rm lower}})$.
Therefore, there exists $({\bar{Z}_{{\rm lower}}}{\bar{Z}_{{\rm lower}}}{}^{\top})^{-1}$,
and thus the Moore-Penrose generalized matrix inverse is 
\begin{align}\nonumber
{\bar{Z}_{{\rm lower}}}{}^{\dagger}={\bar{Z}_{{\rm lower}}}{}^{\top}({\bar{Z}_{{\rm lower}}}{\bar{Z}_{{\rm lower}}}{}^{\top})^{-1}.
\end{align}
This leads to
\begin{align}\nonumber
X=-\bar{Z}_{{\rm upper}}{\bar{Z}_{{\rm lower}}}{}^{\dagger}.
\end{align}
Below we prove that $-\bar{Z}_{{\rm upper}}{\bar{Z}_{{\rm lower}}}{}^{\dagger}$
is the minimal norm solution.

Noting $\bar{Z}^{\top}\bar{Z}=I_{n+\ell -r}$, we have
\begin{align}\nonumber
\|-\bar{Z}_{{\rm upper}}{\bar{Z}_{{\rm lower}}}{}^{\dagger}\|_{\rm F}{}^{2}
=\|\bar{Z}{\bar{Z}_{{\rm lower}}}{}^{\dagger}\|_{\rm F}{}^{2}-\ell 
=\|{\bar{Z}_{{\rm lower}}}{}^{\dagger}\|_{\rm F}{}^{2}-\ell.
\end{align}
The generalized matrix inverse is not unique.
However, since the Moore-Penrose generalized matrix inverse is the minimal norm one,
$-\bar{Z}_{{\rm upper}}{\bar{Z}_{{\rm lower}}}{}^{\dagger}$
is the minimal norm solution, i.e., 
\begin{align}\label{eq:ZXmin}
-\bar{Z}_{{\rm upper}}{\bar{Z}_{{\rm lower}}}{}^{\dagger}=X_{\min}.
\end{align}
To clarify the relationship between $\bar{Y}$ and $\bar{Z}$,
define $\bar{Z}_{\rm lower}^{\perp}\in \R^{(n+\ell-r) \times (n-r)}$ such that
\begin{align}\nonumber
{\rm rank}(\bar{Z}_{\rm lower}^{\perp})=n-r,\quad
\bar{Z}_{\rm lower}\bar{Z}_{\rm lower}^{\perp}=0_{\ell \times (n-r)} \Longleftrightarrow \Nu(\bar{Z}_{\rm lower})=\Ra(\bar{Z}_{\rm lower}^{\perp}).
\end{align}
Thus, substituting $M:=[\bar{Z}_{\rm lower}{}^{\dagger} \ \ \bar{Z}_{\rm lower}^{\perp}]$ in \eqref{eq:ZMY},
we have
\begin{align}\nonumber
\bar{Z}_{\rm upper}\bar{Z}_{\rm lower}^{\perp}=W,
\end{align}
where $W$ is a submatrix of $\bar{Y}$ in \eqref{eq:subspace} as in Lemma~\ref{lem:barYX}.
Thus, any $X$ can be obtained by~\eqref{eq:anyX}.
In summary, it is necessary and sufficient to find $\bar{Z}$.

From the above discussions, 
we aim to estimate $\bar{Z}$
to identify the set of solutions,
containing the minimal norm solution as in~\eqref{eq:ZXmin}.
More precisely, we prove the convergence of $V$ to $\bar{V}$,
resulting in the convergence of $Z(=PV)$ to $\bar{Z}(=P\bar{V})$ in the next section.

\subsection{Statistical asymptotic analysis for noisy data}
In this section, we prove strong consistency of the estimator $X_{\rm ctls}$.
To this end, recall that
$E$ is defined as in~\eqref{eq:E},
also for the new regression model in~\eqref{eq:newmodel}.
The next lemma is 
interpreted as a generalization of~\cite[Lemma~3.1]{Gleser1981} with its proof
to projected subspace by $P$ in \eqref{eq:P}.

\begin{lemma}\label{lem:key}
Under Assumptions~\ref{as:E} and~\ref{as:PCCP2},
we have 
\begin{align}\label{eq:key}
\lim_{m\to \infty}m^{-1}G=
\bar{T}+\sigma^2 I_{n+\ell-k}
\end{align}
with probability one.
\end{lemma}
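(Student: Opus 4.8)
The plan is to exploit the additive splitting $C_2 = \bar{C}_2 + E$, which holds because the lower blocks of the regression model \eqref{eq:newmodel} satisfy $A_2 = \bar{A}_2 + E_{A_2}$ and $B_2 = \bar{B}_2 + E_{B_2}$, so that $[A_2\ B_2] = [\bar{A}_2\ \bar{B}_2] + [E_{A_2}\ E_{B_2}]$. Substituting this into the definition \eqref{eq:defG} of $G$ and expanding the quadratic form, I would write
\begin{align*}
m^{-1}G = m^{-1}\bar{G} + m^{-1}P^{\top}\bar{C}_2^{\top}EP + m^{-1}P^{\top}E^{\top}\bar{C}_2P + m^{-1}P^{\top}E^{\top}EP,
\end{align*}
and then establish the almost-sure limit of each of the four terms separately. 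The first term converges to $\bar{T}$ directly by Assumption~\ref{as:PCCP2}, so all the work lies in the three terms containing $E$.

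The key structural observation, which is exactly the ``projected'' extension of Gleser's argument advertised before the lemma, is that the rows of $EP$ remain i.i.d.\ with mean zero and covariance $\sigma^2 I_{n+\ell-k}$. Indeed, $P$ is deterministic and independent of $m$, since it is built from the noise-free blocks $A_1,B_1$ alone; writing the $i$-th row of $E$ as $\vec{e}_i^{\top}$, the $i$-th row of $EP$ is $\vec{e}_i^{\top}P$, whose covariance is $P^{\top}(\sigma^2 I_{n+\ell})P = \sigma^2 P^{\top}P = \sigma^2 I_{n+\ell-k}$ by Assumption~\ref{as:E} and the orthonormality condition $P^{\top}P = I_{n+\ell-k}$ in \eqref{eq:P}.

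With this in hand the remaining three terms reduce to scalar statements applied entrywise. For the quadratic term $m^{-1}P^{\top}E^{\top}EP = m^{-1}(EP)^{\top}(EP)$, its $(p,q)$ entry is $m^{-1}\sum_{i=1}^{m-k}(EP)_{ip}(EP)_{iq}$, a normalized sum of i.i.d.\ random variables with common mean $\sigma^2\delta_{pq}$; the Kolmogorov strong law of large numbers then gives convergence to $\sigma^2\delta_{pq}$ with probability one, so this term tends to $\sigma^2 I_{n+\ell-k}$. For the cross term $m^{-1}P^{\top}\bar{C}_2^{\top}EP = m^{-1}(\bar{C}_2P)^{\top}(EP)$, I would apply Lemma~\ref{lem:kolmogolov} entrywise, taking $\alpha_i = (\bar{C}_2P)_{ip}$ (deterministic) and $\epsilon^{(i)} = (EP)_{iq}$ (i.i.d., mean zero, variance $\sigma^2$); the boundedness hypothesis $m^{-1}\sum_i\alpha_i^2 = m^{-1}\bar{G}_{pp}$ is supplied by the convergence of $m^{-1}\bar{G}$ in Assumption~\ref{as:PCCP2}, so this term vanishes in the limit, and the third term is its transpose. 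Collecting the four limits yields $\bar{T} + 0 + 0 + \sigma^2 I_{n+\ell-k}$, and since each of the finitely many entrywise limits holds on a probability-one event, their intersection is again probability one.

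I expect the main obstacle to be conceptual rather than computational: correctly verifying that the projection $P$ transports the noise into a matrix $EP$ whose rows still form an i.i.d.\ mean-zero sequence with covariance exactly $\sigma^2 I_{n+\ell-k}$, which is precisely where the orthonormality $P^{\top}P = I$ and the $m$-independence of $P$ are essential. Once that reduction is justified, both the quadratic and cross terms fall to the already-established Lemma~\ref{lem:kolmogolov} and the Kolmogorov strong law. A minor bookkeeping point is that $E$ has $m-k$ rows rather than $m$, but since $k$ is fixed this only introduces the harmless factor $(m-k)/m \to 1$.
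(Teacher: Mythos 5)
Your proposal is correct and follows essentially the same route as the paper: the same four-term expansion of $m^{-1}P^{\top}(\bar{C}_2+E)^{\top}(\bar{C}_2+E)P$, Assumption~\ref{as:PCCP2} for the deterministic term, the strong law of large numbers for the quadratic noise term, and Lemma~\ref{lem:kolmogolov} applied entrywise for the cross terms. The only (harmless) cosmetic difference is that you push the projection onto the noise first and work with the i.i.d.\ rows of $EP$, whereas the paper applies the limit theorems to the raw entries of $E$ and $\bar{C}_2P$ and then multiplies by $P^{\top}$ afterward.
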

\begin{proof}
Firstly, we focus on
\begin{align}\label{eq:exrandGP}
m^{-1}G=m^{-1}P^{\top}{C_{2}}^{\top}C_{2}P=m^{-1}P^{\top}(\bar{C}_{2}+E)^{\top}(\bar{C}_{2}+E)P,
\end{align}
where the first equality is due to \eqref{eq:defG},
and the second equality is due to~\eqref{eq:E3}, \eqref{eq:barC2}, and \eqref{eq:C2}.
The right-hand side is 
\begin{align*}
m^{-1}P^{\top}{\bar{C}}_{2}^{\top}\bar{C}_{2}P+m^{-1}P^{\top}{\bar{C}_{2}}^{\top}EP 
+m^{-1}P^{\top}E^{\top}\bar{C}_{2}P+m^{-1}P^{\top}E^{\top}EP.
\end{align*}
Thus, we evaluate each term in turn.

Under Assumption~\ref{as:PCCP2},
\begin{align}\label{eq:PGGP}
\lim_{m\to \infty}m^{-1}P^{\top}{\bar{C}_{2}}^{\top}\bar{C}_{2}P=\bar{T}
\end{align}
is obvious.
In addition, the strong law of large numbers under Assumption~\ref{as:E} yields
\begin{align*}
\lim_{m\to \infty}m^{-1}E^{\top}E=\sigma^2 I_{n+\ell }
\end{align*}
with probability one.
From the orthogonality $P^{\top}P=I_{n+\ell -j}$ as in \eqref{eq:P}, we have
\begin{align}\label{eq:PEEP}
\lim_{m\to \infty}m^{-1}P^{\top}E^{\top}EP=\sigma^2 I_{n+\ell-k}
\end{align}
with probability one.

In the following, we prove
\begin{align}\label{eq:PEGP}
\lim_{m\to \infty}m^{-1}P^{\top}E^{\top}\bar{C}_{2}P=0_{(n+\ell -k)\times (n+\ell -k)} 
\end{align}
with probability one,
implying the convergence of $m^{-1}P^{\top}{\bar{C}_{2}}^{\top}EP$.
To this end, letting $\bar{U}=\bar{C}_{2}P \in \R^{m \times (n+\ell -k)}$, we have
\begin{align*}
\lim_{m\to \infty}m^{-1}\bar{U}^{\top}\bar{U}=\bar{T} \in \R^{(n+\ell -k) \times (n+\ell -k)} 
\end{align*}
under Assumption~\ref{as:PCCP2}.
Let $\bar{u}_{ij}\ (1\le i \le m,\ 1\le j \le n+\ell -k)$ denote the $(i,j)$ elements of $\bar{U}$.
Then $\lim_{m\to \infty}m^{-1}\sum_{i=1}^{m}{\bar{u}_{ij}}^{2}$ exist for all $j$.
Similarly, let $e_{ij'}\ (1\le i \le m,\ 1\le j' \le n+\ell )$ denote the $(i,j')$ elements of $E$.
Here we use Lemma~\ref{lem:kolmogolov}.
For fixed $j$ and $j'$,
the above random variables $e_{ij'}\bar{u}_{ij}\ (i=1,2,\ldots ,m)$ are all independent
because $e_{ij'}$ are i.i.d. for all $i$.
Note that $\bar{u}_{ij}\ (i=1,2,\ldots ,m)$ correspond to
$\alpha_{i}\ (i=1,\ldots ,m)$ in Lemma~\ref{lem:kolmogolov}.
Therefore, for $1\le j \le n+\ell -k$ and $1 \le j' \le n+\ell$,
\begin{align*}
\lim_{m\to \infty}m^{-1}\sum_{h=1}^{m} e_{ij'}\bar{u}_{ij}=0
\end{align*}
with probability one.
The matrix expression is
\begin{align*}
\lim_{m\to \infty}m^{-1}E^{\top}\bar{U}=\lim_{m\to \infty}m^{-1}E^{\top}\bar{C}_{2}P=0_{(n+\ell -k)\times (n+\ell -k)}
\end{align*}
with probability one,
leading to~\eqref{eq:PEGP}.

Therefore, using~\eqref{eq:exrandGP}, \eqref{eq:PGGP}, \eqref{eq:PEEP}, and \eqref{eq:PEGP},
we obtain \eqref{eq:key}
with probability one.
\end{proof}

Importantly, the eigenvector matrix $V$ of $G$
converges to $\bar{V}$ that is the eigenvector matrix of $\bar{T}$ from Lemma~\ref{lem:key},
even though $m^{-1}G$ does not converge to $\bar{T}$ as in Lemma~\ref{lem:key}.
However, since the eigenvector matrix $\bar{V}$ is not unique,
we clarify the meaning of the convergence $\bar{V}$ rigorously below.

Although the eigenvector matrix $\bar{V}$ is not unique,
any eigenvector matrix can be represented by
$\bar{V}\bar{Q}$ for some orthogonal matrix $\bar{Q}\in \R^{(n+\ell -r)\times (n+\ell -r)}$.
Let $\bar{\mathcal{V}}$ denote such a set of the eigenvector matrices.
Then, from the continuities of eigenvectors and Lemma~\ref{lem:key}, we have
\begin{align}\nonumber
\lim_{m\to \infty}\min_{\bar{V}\in \bar{\mathcal{V}}} \|V-\bar{V}\|
\end{align}
with probability one.
Explicitly specifying the sample space $\Omega$
for the sake of mathematical rigor, the above convergence property
can be expressed as
\begin{align*}
\mathcal{P}(\{ \omega \in \Omega \mid \lim_{m\to \infty}\min_{\bar{V}\in \bar{\mathcal{V}}} \|V^{(m)}(\omega)-\bar{V}\| \})=1,
\end{align*}
where $\bar{\mathcal{V}}$ is independent of $\Omega$.

More precisely, $V$ is not unique either,
even if the eigenvalues are all distinct
in view of the arbitrariness of the sign.
However, we can define convergence as above 
by interpreting it as the representative element of the subspace
corresponding to the smallest eigenvalues,
resulting in the next crucial lemma.
We present here that the proof
based on the famous Davis-Kahan perturbation theorem in~\cite{DK1970},
while this property is clear from the continuities of the eigenvectors.

\begin{lemma}\label{lem:key2}
Under Assumptions~\ref{as:E} and~\ref{as:PCCP2},
we have 
\begin{align}\label{eq:key2}
\lim_{m\to \infty}\min_{\bar{V}\in \bar{\mathcal{V}}} \|V-\bar{V}\|
\end{align}
with probability one.
\end{lemma}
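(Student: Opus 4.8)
The plan is to obtain the subspace convergence from Lemma~\ref{lem:key} by combining a spectral-gap argument with the Davis--Kahan $\sin\Theta$ theorem. First I would record the spectral structure of the limiting matrix $\bar{T}+\sigma^2 I_{n+\ell-k}$. Adding a scalar multiple of the identity does not change eigenvectors, so the eigenvectors of $\bar{T}+\sigma^2 I_{n+\ell-k}$ coincide with those of $\bar{T}$; in particular the invariant subspace associated with its smallest $n+\ell-r$ eigenvalues is exactly $\Nu(\bar{T})=\Ra(\bar{V})$. By Assumption~\ref{as:PCCP2} we have $\mathrm{rank}(\bar{T})=r-k$, hence those smallest $n+\ell-r$ eigenvalues are all equal to $\sigma^2$, while the remaining $r-k$ eigenvalues have the form $\lambda+\sigma^2$ with $\lambda>0$. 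This produces a fixed spectral gap $\delta:=\lambda_{\min}^{+}>0$ separating the target eigenvalue $\sigma^2$ from the rest, where $\lambda_{\min}^{+}$ denotes the smallest positive eigenvalue of $\bar{T}$.

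Next I would transfer this gap to the perturbed matrix $m^{-1}G$. By Lemma~\ref{lem:key}, $m^{-1}G\to\bar{T}+\sigma^2 I_{n+\ell-k}$ with probability one, so on the probability-one event where this convergence holds, Weyl's inequality shows that every eigenvalue of $m^{-1}G$ lies within $\|m^{-1}G-(\bar{T}+\sigma^2 I_{n+\ell-k})\|$ of the corresponding eigenvalue of the limit. For all sufficiently large $m$ this perturbation is smaller than $\delta/2$, so the smallest $n+\ell-r$ eigenvalues of $m^{-1}G$ remain within $\delta/2$ of $\sigma^2$ and stay cleanly separated from the remaining ones. This is precisely the separation hypothesis required to invoke Davis--Kahan.

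With the gap in hand I would apply the Davis--Kahan $\sin\Theta$ theorem to $\bar{T}+\sigma^2 I_{n+\ell-k}$ and its perturbation $m^{-1}G$, comparing the invariant subspaces spanned by $\bar{V}$ and by $V$. The theorem yields
\[
\|\sin\Theta(\Ra(V),\Ra(\bar{V}))\|\le \frac{\|m^{-1}G-(\bar{T}+\sigma^2 I_{n+\ell-k})\|}{\delta}\longrightarrow 0
\]
with probability one. Finally I would convert this subspace distance into the stated quantity $\min_{\bar{V}\in\bar{\mathcal{V}}}\|V-\bar{V}\|$ through the standard orthogonal Procrustes bound for two matrices with orthonormal columns, namely that the minimum over orthogonal alignments is controlled, up to an absolute constant, by $\|\sin\Theta\|$. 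Since $V$ and every $\bar{V}\in\bar{\mathcal{V}}$ have orthonormal columns, and the rotational ambiguity of the eigenbasis is exactly the freedom parametrized by $\bar{\mathcal{V}}$, this shows the stated limit is zero with probability one.

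The main obstacle I anticipate is the careful handling of the non-uniqueness of the eigenbasis: because the target eigenvalue $\sigma^2$ has multiplicity $n+\ell-r>1$, there is no canonical choice of $V$ or $\bar{V}$, and convergence can only be asserted modulo orthogonal transformations. Davis--Kahan is the right instrument here precisely because it controls the gap between the subspaces rather than between individual eigenvectors; the remaining effort is the routine, if slightly delicate, passage from the $\sin\Theta$ subspace metric to the Procrustes-aligned matrix distance appearing in the statement, together with verifying that the whole argument takes place on the single probability-one event supplied by Lemma~\ref{lem:key}.
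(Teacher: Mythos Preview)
Your proposal is correct and follows essentially the same route as the paper: both arguments invoke Lemma~\ref{lem:key} to obtain $m^{-1}G\to\bar{T}+\sigma^2 I_{n+\ell-k}$ almost surely, then apply the Davis--Kahan $\sin\Theta$ theorem across the spectral gap of the limit to conclude that the projection $\|(I-\bar{V}\bar{V}^\top)V\|$ (equivalently, $\|\sin\Theta(\Ra(V),\Ra(\bar{V}))\|$) tends to zero, and finally pass to the Procrustes-aligned distance $\min_{\bar{V}\in\bar{\mathcal{V}}}\|V-\bar{V}\|$ using that both $V$ and $\bar{V}$ have orthonormal columns. Your explicit use of Weyl's inequality to certify the eigenvalue separation for finite $m$, and your identification of the gap as $\lambda_{\min}^{+}(\bar{T})$, are slightly more detailed than the paper's treatment but do not constitute a different strategy.
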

\begin{proof}
The Davis-Kahan perturbation theorem~\cite{DK1970} with Lemma~\ref{lem:key} implies, for any fixed positive $\epsilon < \sigma^2$, 
\begin{align}\nonumber
\|(I_{n+\ell -k}-\bar{V}\bar{V}{}^{\top})V\|\le \frac{(I_{n+\ell -k}-\bar{V}\bar{V}{}^{\top})(\bar{T}+\sigma^2I_{n+\ell -k}-m^{-1}G)V}{\sigma^2 -\epsilon}
\end{align}
with probability one for all sufficiently large $m$.
In addition, from the convergence property in Lemma~\ref{lem:key},
we have
\begin{align}\label{eq:VbarV}
\lim_{m\to \infty}\|(I_{n+\ell -k}-\bar{V}\bar{V}{}^{\top})V\|=
\lim_{m\to \infty}\|V-\bar{V}(\bar{V}{}^{\top}V)\|=0
\end{align}
with probability one.
Since $V$ and $\bar{V}$ comprise orthonormal column vectors, 
$V^{\top}V=\bar{V}^{\top}\bar{V}=I_{n+\ell -r}$ holds
for all $m$.
Thus, for $\bar{V}^{\top}V$ in~\eqref{eq:VbarV}, 
letting $\mathcal{Q}$ denote
a set of all $(n+\ell -r)\times (n+\ell -r)$ orthogonal matrices $Q$,
we have
\begin{align}\nonumber
\lim_{m\to \infty}\min_{Q\in {\mathcal{Q}}} \|\bar{V}^{\top}V-Q\|=0
\end{align}
with probability one.
Therefore, we obtain~\eqref{eq:key2},
where $\bar{\mathcal{V}}$ is a set of $\bar{V}$
comprising the orthonormal basis vectors of $\Nu(\bar{T})$.
\end{proof}

The above feature~\eqref{eq:key2} in Lemma~\ref{lem:key2} implies
\begin{align}
\Ra \left( \lim_{m\to \infty} V \right)=\Ra \left( \bar{V} \right) \ \text{with probability one}.
\end{align}
Recall that we aim to estimate $\bar{Y}$ in \eqref{eq:subspace} replaced with ${X}$
due to the lack of the uniqueness of ${X}$.
Note that $\Ra(\bar{Y})$ covers any $\bar{X}$.
The main theorem below states that
the estimator $Z:=PV$ identifies $\Ra(\bar{Y})$ with probability one
as $m\to \infty$.

\begin{theorem}\label{thm:new}
Under Assumptions~\ref{as:E} and~\ref{as:PCCP2},
we have
\begin{align}\label{eq:RaX}
\Ra \left( \lim_{m\to \infty}Z \right)=\Ra(\bar{Y}) \ \text{with probability one},
\end{align}
where $Z$ and $\bar{Y}$ are defined as in \eqref{eq:subspace} and \eqref{eq:Z}, respectively.
\end{theorem}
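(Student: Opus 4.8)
The plan is to reduce the claim to the subspace convergence already established in Lemma~\ref{lem:key2} and then transport it through the deterministic matrix $P$. First I would recall that Lemma~\ref{lem:key2} yields $\min_{\bar{V}\in\bar{\mathcal{V}}}\norm{V-\bar{V}}\to 0$ with probability one as $m\to\infty$, where $\bar{\mathcal{V}}$ is the set of orthonormal bases of $\Nu(\bar{T})=\Ra(\bar{V})$. This is precisely the statement that the column space $\Ra(V)$ converges to $\Ra(\bar{V})$; equivalently, the orthogonal projectors satisfy $VV^{\top}\to\bar{V}\bar{V}^{\top}$.

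The second step is to push this convergence through the relation $Z=PV$. Since $P$ in~\eqref{eq:P} is deterministic with orthonormal columns, $P^{\top}P=I_{n+\ell-k}$, multiplication by $P$ is an isometry in the Frobenius norm: $\norm{P(V-\bar{V})}=\norm{V-\bar{V}}$ for every fixed $\bar{V}$. Taking the minimum over $\bar{\mathcal{V}}$ and letting $m\to\infty$ gives $\min_{\bar{V}\in\bar{\mathcal{V}}}\norm{Z-P\bar{V}}\to 0$ with probability one; that is, $Z=PV$ converges, up to an orthogonal rotation of its columns, to $\bar{Z}=P\bar{V}$. Because $P$ has full column rank, it carries subspaces to subspaces of the same dimension and preserves subspace convergence, so $\Ra(Z)=\Ra(PV)$ converges to $\Ra(P\bar{V})=\Ra(\bar{Z})$.

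Finally I would invoke the deterministic identity~\eqref{eq:RaZYNuA} from the noiseless analysis, namely $\Ra(\bar{Z})=\Ra(\bar{Y})$. Combining it with the subspace convergence $\Ra(Z)\to\Ra(\bar{Z})$ established above yields $\Ra(\lim_{m\to\infty}Z)=\Ra(\bar{Y})$ with probability one, which is exactly~\eqref{eq:RaX}.

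The main obstacle is not a hard estimate but a matter of rigor: the matrix $Z$ (like $V$) is determined only up to an orthogonal rotation of its columns and a choice of sign, so it need not converge as a sequence of matrices, and the symbol $\lim_{m\to\infty}Z$ must be read through subspace convergence. I would make this precise by working with the uniquely determined orthogonal projector $ZZ^{\top}$ and proving $ZZ^{\top}\to\bar{Z}\bar{Z}^{\top}$, from which the equality of ranges in the limit follows. Some care is also needed to ensure that no dimension is lost in the limit: the discussion after~\eqref{eq:RaZYNuA} guarantees ${\rm rank}(\bar{Z})=n+\ell-r$ for all sufficiently large $m$, so the limiting projector has the correct rank $n+\ell-r$ and $\Ra(\bar{Z})=\Ra(\bar{Y})$ is genuinely $(n+\ell-r)$-dimensional.
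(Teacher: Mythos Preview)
Your proposal is correct and follows essentially the same approach as the paper: invoke Lemma~\ref{lem:key2} to obtain subspace convergence of $V$ to $\bar{V}$, push it through the fixed matrix $P$ via $Z=PV$, and conclude using the deterministic identity $\Ra(P\bar{V})=\Ra(\bar{Z})=\Ra(\bar{Y})$ from~\eqref{eq:RaZYNuA}. The paper's proof is in fact only two sentences; your version supplies the details (the isometry of $P$, the interpretation of $\lim_{m\to\infty}Z$ through projectors, and the rank check) that the paper leaves implicit.
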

\begin{proof}
Recall that $\bar{V}$ comprises the orthonormal basis vectors
of $\Nu(\bar{T})$, resulting in $\Ra(P\bar{V})=\Ra(\bar{Z})=\Ra(\bar{Y})$.
From Lemma~\ref{lem:key2}, for $Z:=PV$, we obtain~\eqref{eq:RaX}.
\end{proof}

From the above theorem, the minimal norm solution
of \eqref{eq:ctls2rank} converges with probability one
as follows.

\begin{corollary}\label{cor:new}
Under Assumptions~\ref{as:E} and~\ref{as:PCCP2},
we have
\begin{align}\label{eq:xctlsx}
\lim_{m\to \infty}{X}_{\rm ctls}^{*}=X_{\min} \ \text{with probability one}.
\end{align}
\end{corollary}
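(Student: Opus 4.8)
The plan is to deduce the Corollary directly from Theorem~\ref{thm:new} by exploiting the explicit formulas $X_{\rm ctls}^{*}=-Z_{{\rm upper}}{Z_{{\rm lower}}}^{\dagger}$ in \eqref{eq:Xctls*} and $X_{\min}=-\bar{Z}_{{\rm upper}}{\bar{Z}_{{\rm lower}}}^{\dagger}$ in \eqref{eq:ZXmin}, and showing that the map sending the computed Ritz basis $Z$ to this minimal norm solution is continuous at the limiting basis $\bar{Z}$. The subspace convergence $\Ra(Z)\to\Ra(\bar{Y})$ is not by itself enough, because $X_{\rm ctls}^{*}$ is recovered from $Z$ through a pseudoinverse; the work lies in promoting the subspace-level statement to a matrix-level limit.

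First I would extract from Lemma~\ref{lem:key2} a sequence of orthogonal matrices $Q^{(m)}\in\mathcal{Q}$ attaining the minimum in \eqref{eq:key2}, so that $\norm{V-\bar{V}Q^{(m)}}\to 0$ with probability one. Multiplying by $P$ and using $\norm{P}=1$ (from $P^{\top}P=I_{n+\ell-k}$ in \eqref{eq:P}) gives $\norm{Z-\bar{Z}Q^{(m)}}\to 0$, whence blockwise $\norm{Z_{{\rm upper}}-\bar{Z}_{{\rm upper}}Q^{(m)}}\to 0$ and $\norm{Z_{{\rm lower}}-\bar{Z}_{{\rm lower}}Q^{(m)}}\to 0$ with probability one.

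Next I would invoke the continuity of the pseudoinverse on full-row-rank matrices. Since $\bar{Z}_{{\rm lower}}$ is row full rank (shown via \eqref{eq:ZMY}) and $Q^{(m)}$ is orthogonal, $\bar{Z}_{{\rm lower}}Q^{(m)}$ is also row full rank with $(\bar{Z}_{{\rm lower}}Q^{(m)})^{\dagger}=(Q^{(m)})^{\top}\bar{Z}_{{\rm lower}}^{\dagger}$. Because $Z_{{\rm lower}}$ is row full rank for all sufficiently large $m$ with probability one (as asserted after \eqref{eq:Z}), the identity $M^{\dagger}=M^{\top}(MM^{\top})^{-1}$ applies to both $Z_{{\rm lower}}$ and $\bar{Z}_{{\rm lower}}Q^{(m)}$, and the continuity of $M\mapsto M^{\dagger}$ on this rank-stable set yields $\norm{Z_{{\rm lower}}^{\dagger}-(Q^{(m)})^{\top}\bar{Z}_{{\rm lower}}^{\dagger}}\to 0$ with probability one.

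Finally I would combine these limits through the decomposition $Z_{{\rm upper}}{Z_{{\rm lower}}}^{\dagger}-\bar{Z}_{{\rm upper}}\bar{Z}_{{\rm lower}}^{\dagger}=(Z_{{\rm upper}}-\bar{Z}_{{\rm upper}}Q^{(m)}){Z_{{\rm lower}}}^{\dagger}+\bar{Z}_{{\rm upper}}Q^{(m)}({Z_{{\rm lower}}}^{\dagger}-(Q^{(m)})^{\top}\bar{Z}_{{\rm lower}}^{\dagger})$, noting that $\norm{\bar{Z}_{{\rm upper}}Q^{(m)}}=\norm{\bar{Z}_{{\rm upper}}}$ is bounded and $\norm{Z_{{\rm lower}}^{\dagger}}$ is bounded (being convergent), so both terms vanish. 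Hence $X_{\rm ctls}^{*}=-Z_{{\rm upper}}Z_{{\rm lower}}^{\dagger}\to -\bar{Z}_{{\rm upper}}\bar{Z}_{{\rm lower}}^{\dagger}=X_{\min}$ with probability one, which is \eqref{eq:xctlsx}. I expect the pseudoinverse step to be the main obstacle: the Moore--Penrose inverse is discontinuous across rank changes, so the argument genuinely relies on the earlier with-probability-one guarantee that $Z_{{\rm lower}}$ retains full row rank $\ell$ for large $m$; the moving orthogonal frame $Q^{(m)}$ is otherwise harmless because it cancels via $(\bar{Z}_{{\rm lower}}Q^{(m)})^{\dagger}=(Q^{(m)})^{\top}\bar{Z}_{{\rm lower}}^{\dagger}$.
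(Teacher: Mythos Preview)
Your argument is correct. The paper itself offers no explicit proof of Corollary~\ref{cor:new}: it simply states the corollary immediately after Theorem~\ref{thm:new}, treating it as a direct consequence of the subspace convergence together with the formulas \eqref{eq:Xctls*} and \eqref{eq:ZXmin} established in Section~4.2. Your write-up supplies exactly the continuity details the paper leaves implicit --- namely, extracting the orthogonal frame $Q^{(m)}$ from Lemma~\ref{lem:key2}, pushing forward by $P$, and using rank-stable continuity of $M\mapsto M^{\dagger}$ on the lower block --- and this is the natural way to make the paper's one-line deduction rigorous. One small cosmetic point: Lemma~\ref{lem:key2} as written in the paper omits the ``$=0$'' at the end of \eqref{eq:key2}, but the intended meaning (and what its proof actually establishes via \eqref{eq:VbarV}) is $\min_{\bar V\in\bar{\mathcal V}}\|V-\bar V\|\to 0$, which is precisely what you use.
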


For the usually performed consistency analysis of the minimal norm solution, we can obtain it as the above corollary of the main theorem as described above. The main novelty is that the main theorem, i.e., Theorem~\ref{thm:new}, also includes the consistency to $\Nu(\bar{A})$ for the exact coefficient matrix $\bar{A}$ in \eqref{eq:A11A12}.

From the point of view of finding $\Nu(\bar{A})$, 
it is also useful to compute the eigenvectors 
corresponding to the largest $r-k$ eigenvalues of $A^{\top}A$,
i.e., the truncated SVD of $A$.
While there may be other ways to do this, 
the contribution of this paper is to point out
that the computed $Z$ in the constrained TLS problem with the rank constraints~\eqref{eq:ctls2rank}
can recover all the solution vectors with probability one as $m \to \infty$.

\subsection{Rank deficient case in the asymptotic regime}
Here we discuss the rank deficient case in the asymptotic regime as $m\to \infty$.
In other words, we consider the following condition:
\begin{align}\label{eq:A11A12r}
{\rm rank}\left(
\begin{bmatrix}
{A}_{1} \\ 
\bar{A}_{2}
\end{bmatrix}
\right)
=r \le n,
\quad
\lim_{m\to \infty}m^{-1}\bar{G}=\bar{T},
\quad
{\rm rank}\left(\bar{T}\right)=r_{\infty} < r.
\end{align}

Define $\bar{Y}$ as in~\eqref{eq:subspace}
with the use of $r$.
Then any solution $X$ can be given by
\begin{align}\nonumber
L\in \R^{(n-r)\times \ell},\quad 
X=X_{\min}+WL.
\end{align}
Note that ${\rm rank}(m^{-1}\bar{G})=r$ for all sufficiently large $m$.
Let $\bar{V}\in \R^{(n+\ell -k)\times (n+\ell -r)}$ denote an eigenvector matrix
comprising the eigenvectors corresponding to the zero eigenvalues of $m^{-1}\bar{G}$.
Then we have
\begin{align}\nonumber
\bar{Z}:=P\bar{V},\quad \Ra(\bar{Z})=\Ra(\bar{Y})
\end{align}
in the same manner as~\eqref{eq:RaZYNuA}.
In the case of $r_{\infty}={\rm rank}(\lim_{m \to \infty}m^{-1}\bar{G})<r={\rm rank}(m^{-1}\bar{G})$
as in \eqref{eq:A11A12r},
the $r-r_{\infty}$ positive eigenvalues of $m^{-1}\bar{G}$ converge to 0 as $m\to \infty$.
Let $\bar{V}_{\infty}\in \R^{(n+\ell -k)\times (n+\ell -r_{\infty})}$ denote 
an eigenvector matrix corresponding to
the zero eigenvalues of $\bar{T}$.
Then, the continuities of the eigenvectors lead to
$\Ra(\bar{V})\subset \Ra(\bar{V}_{\infty})$, resulting in
\begin{align}\label{eq:Zinf}
\bar{Z}_{\infty}:=P\bar{V}_{\infty},\quad \Ra(\bar{Y}) \subset \Ra(\bar{Z}_{\infty}).
\end{align}
Thus, $\Ra(\bar{Z}_{\infty})$ can contain $\Ra(\bar{Y})$
for computing $X$.

On the basis of the above asymptotic analysis, 
we consider the following optimization problem:
\begin{align}\label{eq:ctls2r}
\min_{{X}_{\rm ctls}, \Delta A, \Delta {B}} {\|\Delta A\|_{\rm F}}^{2}+{\|\Delta {B}\|_{\rm F}}^{2}
\quad {\rm such \ that}\  \begin{bmatrix}
{A}_{1} \\
 {A}_{2}+\Delta A_{2} 
\end{bmatrix}{X}_{\rm ctls}=
\begin{bmatrix}
B_{1}
\\
B_2+\Delta {B}_{2}
\end{bmatrix},
\quad 
{\rm rank}\left(
\begin{bmatrix}
{A}_{1} \\ 
{A}_{2}+\Delta A_{2} 
\end{bmatrix}
\right)=r_{\infty}.
\end{align}

Although $r_{\infty}$ is not given for a finite $m$,
the rank should be estimated by the distribution of
the eigenvalues of $G$.
Regarding the eigenvalue distribution,
we have
\begin{align}\nonumber
\lim_{m\to \infty}m^{-1}G=\bar{T}+\sigma^{2}I_{n+\ell -k}
\end{align}
with probability one
in the same manner as in Lemma~\ref{lem:key}.
Thus, for sufficiently large $m$,
the $n+\ell -r_{\infty}$ smallest eigenvalues
appear to be nearly multiple,
interpreted as the bias due to the noise.
Thus, from the observed matrix $G$, the rank might be estimated by 
${\rm rank}(\bar{T})=r_{\infty}$.
In fact, the truncated TLS performs such a calculation.
In this case, we solve \eqref{eq:ctls2r}.
Thus, let $V  \in \R^{(n+\ell -k )\times (n+\ell -r_{\infty})}$ 
denote an eigenvector matrix comprising
the orthonormal eigenvectors corresponding to the smallest $n+\ell -r_{\infty}$
eigenvalues of $G$. 
Similarly to \eqref{eq:Z}, let
\begin{align}\label{eq:Zr}
Z:=PV \in \R^{(n+\ell )\times (n+\ell -r_{\infty})}.
\end{align}
The next theorem states that
$Z$ comprises basis vectors of an $n+\ell -r_{\infty}$ dimiensional subspace
including $\Ra(\bar{Y})$.

\begin{theorem}\label{thm:new3}
Under Assumptions~\ref{as:E} and~\ref{as:PCCP2},
we have
\begin{align}\label{eq:RaXsubset}
\Ra(\bar{Y}) \subset \Ra \left( \lim_{m\to \infty}Z \right) \ \text{with probability one},
\end{align}
where $\bar{Y}$ and $Z$ are defined as in \eqref{eq:subspace} and \eqref{eq:Zr}, respectively.
\end{theorem}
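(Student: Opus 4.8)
The plan is to transfer the spectral-gap argument behind Lemma~\ref{lem:key2} and Theorem~\ref{thm:new} from the exact null space to the larger bottom eigenspace of $G$ that is forced by the collapsing eigenvalues. First I would record, exactly as noted just before the statement and by the same computation as in Lemma~\ref{lem:key}, that
\[
\lim_{m\to\infty} m^{-1}G = \bar{T} + \sigma^{2} I_{n+\ell -k}
\]
with probability one. Recall from the construction in this subsection that $\bar{V}_{\infty}$ spans $\Nu(\bar{T})$, whose dimension is $n+\ell-r_{\infty}$; hence the limit matrix $\bar{T}+\sigma^{2}I_{n+\ell-k}$ has smallest eigenvalue $\sigma^{2}$ with multiplicity $n+\ell-r_{\infty}$, and the associated eigenspace is exactly $\Nu(\bar{T})=\Ra(\bar{V}_{\infty})$. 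The crucial structural feature is that there is a fixed positive spectral gap separating this cluster from the remaining eigenvalues, which are bounded below by $\sigma^{2}+\mu$, where $\mu>0$ is the smallest positive eigenvalue of $\bar{T}$.

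Next I would apply the Davis-Kahan theorem as in the proof of Lemma~\ref{lem:key2}, but to the bottom cluster of $n+\ell-r_{\infty}$ eigenvalues rather than to the exact null space. Because the gap at $\sigma^{2}$ is positive and independent of $m$, for all sufficiently large $m$ the $n+\ell-r_{\infty}$ smallest eigenvalues of $m^{-1}G$ lie below the gap with probability one, and the perturbation bound gives
\[
\lim_{m\to\infty}\|(I_{n+\ell-k}-\bar{V}_{\infty}\bar{V}_{\infty}{}^{\top})V\|=0
\]
with probability one. Interpreting $V$ as the representative of the bottom subspace exactly as in Lemma~\ref{lem:key2}, this yields $\Ra(\lim_{m\to\infty}V)=\Ra(\bar{V}_{\infty})$ with probability one. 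I would then multiply through by $P$: since $Z=PV$ and $\bar{Z}_{\infty}=P\bar{V}_{\infty}$ with $P^{\top}P=I_{n+\ell-k}$, the subspace convergence of $V$ transfers to $\Ra(\lim_{m\to\infty}Z)=\Ra(P\bar{V}_{\infty})=\Ra(\bar{Z}_{\infty})$ with probability one. Combining this with the inclusion $\Ra(\bar{Y})\subset\Ra(\bar{Z}_{\infty})$ already established in \eqref{eq:Zinf} gives \eqref{eq:RaXsubset}.

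The hard part will be justifying that Davis-Kahan may be applied to the whole cluster of $n+\ell-r_{\infty}$ smallest eigenvalues, and not merely to the $n+\ell-r$ eigenvectors that lie exactly in $\Nu(\bar{G})$ at finite $m$. The subtlety is that the $r-r_{\infty}$ positive eigenvalues of $m^{-1}\bar{G}$ vanish in the limit, so their eigenvectors, which are orthogonal to $\Ra(\bar{V})$ at each finite $m$, are absorbed into the limiting $\sigma^{2}$-eigenspace $\Ra(\bar{V}_{\infty})$; this is precisely why the inclusion in \eqref{eq:RaXsubset} is generally strict rather than an equality. The point to get right is that the relevant gap is the robust asymptotic gap at $\sigma^{2}$ (of size $\mu$), rather than the vanishing finite-$m$ gaps among the collapsing eigenvalues, so that the perturbation estimate is uniform in $m$ and the probability-one statement survives the limit.
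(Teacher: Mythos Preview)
Your proposal is correct and follows essentially the same route as the paper: establish $\Ra(\lim_{m\to\infty}V)=\Ra(\bar V_\infty)$ with probability one, multiply through by $P$, and then invoke the inclusion $\Ra(\bar Y)\subset\Ra(\bar Z_\infty)$ from \eqref{eq:Zinf}. The only difference is one of detail---the paper disposes of the first step in a single phrase (``continuities of eigenvectors''), whereas you spell out the Davis--Kahan argument with the asymptotic gap $\mu$, in the style of Lemma~\ref{lem:key2}; your added remarks about the collapsing eigenvalues and why the inclusion may be strict are accurate and helpful but not required for the proof itself.
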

\begin{proof}
Due to the continuities of eigenvectors,
$\Ra(\lim_{m\to \infty}V)=\Ra(\bar{V}_{\infty})$ with probability one.
Thus, we have
\begin{align}\nonumber
\Ra(\lim_{m\to \infty}Z)=\Ra(\lim_{m\to \infty}PV)=\Ra(P\bar{V}_{\infty}) 
\end{align}
with probability one.
From~\eqref{eq:Zinf}, we obtain~\eqref{eq:RaXsubset}.
\end{proof}

From the above theorem, let
\begin{align}\nonumber
\begin{bmatrix}
Z_{{\rm upper}}
\\
Z_{{\rm lower}}
\end{bmatrix}:=Z,\ 
Z_{{\rm upper}}\in \R^{n\times (n+\ell -r_{\infty})},\
Z_{{\rm lower}}\in \R^{\ell \times (n+\ell -r_{\infty})}.
\end{align}
For all $X_{\rm ctls}$, using a generalized matrix inverse $Z_{\rm lower}{}^{\dagger}$, we have
\begin{align}\nonumber
X_{\rm ctls}=-Z_{{\rm upper}}Z_{\rm lower}{}^{\dagger},
\end{align}
where $Z_{\rm lower}{}^{\dagger}$ is not necessarily the Moor-Penrose matrix inverse.
Thus, $X_{\rm ctls}$ is not unique.
Let $\mathcal{X}_{\rm ctls}$ denote a set of $X_{\rm ctls}$.
Then we have
\begin{align}\nonumber
X\in \mathcal{X}_{\rm ctls}
\end{align}
for any $X$ of the linear regression model.
The lack of the required equations for solving $X$
in the asymptotic regime causes the above unsolvability.
However, we can identify a subspace including $X$
based on $Z \in \R^{(n+\ell) \times (n+\ell -r_{\infty})}$.

Such a feature indicates that,
if some error free equations are given,
they must be applied for enjoying the consistency, though a subspace including the solution can be identified. To clarify the feature, let us consider that the standard TLS in~\eqref{eq:tls} is applied to the regression model with equality constraints as in \eqref{eq:newmodel}.
Then, under Assumption~\ref{as:CTC2}, we see
\begin{align}\nonumber
\lim_{m\to \infty}m^{-1}\bar{A}^{\top}\bar{A}=
\lim_{m\to \infty}m^{-1}{A}_{1}{}^{\top}{A}_{1}+
\lim_{m\to \infty}m^{-1}\bar{A}_{2}{}^{\top}\bar{A}_{2}=
\lim_{m\to \infty}m^{-1}\bar{A}_{2}{}^{\top}\bar{A}_{2}
\end{align}
for a submatrix of $m^{-1}\bar{C}{}^{\top}\bar{C}$.
Since $k$ of $A_{1}\in \R^{k\times n}$ is fixed,
$\lim_{m\to \infty}m^{-1}{A}_{1}{}^{\top}{A}_{1}$ vanishes
as $m \to \infty$.
If the row vectors of $\bar{A}_{2}$ do not have the information
of $A_{1}$, $\lim_{m\to \infty}m^{-1}\bar{A}^{\top}\bar{A}$
is a rank deficient matrix. 
Thus, $X_{\rm tls}$ does not have the consistency.
To ensure the consistency, the computation of
$X_{\rm ctls}$ is required.
This feature also indicates the importance of 
the consistency analysis in this section.

\section{Conclusion}\label{sec:conclusion}
In this paper, the strong consistency
of the estimator by the TLS problem with rank constraints in~\eqref{eq:ctls2rank}
is proved.
Theorem~\ref{thm:new} states that
$\Ra(Z)$ converges to $\Ra(\bar{Y})$ with probability one
as $m\to \infty$, where $m$ is a sample size.
This indicates that a set of solutions $X$ 
can be identified by $\Ra(Z)$.
Thus, the consistency of the minimal norm solution
is obviously proved as in Corollary~\ref{cor:new}.
The result covers the standard TLS problem in~\eqref{eq:tls} 
as the special case $k=0$,
corresponding to $P=I_{n+\ell}$ in~\eqref{eq:P}. 
The statistical analysis here is based on the feature of orthogonal projection by $P$, or more specifically, on the characteristics of the Rayleigh-Ritz procedure, which is essentially useful for the analysis of TLS with row constraints. 
Thus, it leads to a straightforward proof of strong consistency for the solution set. The analysis for solution sets with other constraints and rank constraints is considered to be a future work.

\

\noindent
{\bf Acknowledgment}

\noindent
This study was supported by JSPS KAKENHI Grant Nos. JP21K11909 and JP22K03422.





\end{document}